\begin{document}

\title{Trace and extension operators for fractional Sobolev spaces with variable exponent
}

\titlerunning{Trace and extension operators  }        

\author{Azeddine Baalal \and Mohamed Berghout}


\institute{ Azeddine Baalal \at
	Department of Mathematics, Faculty of Sciences A\"{\i}n Chock, University of Hassan II,\\
	B.P. 5366 M\^{a}arif, Casablanca, Morocco. \\
	\email{azeddine.baalal@univh2c.ma }
	\and
	Mohamed Berghout \at
	  Department of Mathematics, Faculty of Sciences A\"{\i}n Chock, University of Hassan II,\\
      B.P. 5366 M\^{a}arif, Casablanca, Morocco. \\
    \email{moh.berghout@gmail.com }          
}

\date{2017}

\maketitle

\begin{abstract}
Assume that $\Omega \subset 
\mathbb{R}
^{n}$ is a bounded domain. We show that under certain regularity assumptions
on $\Omega $ there exists a linear extension operator from the space $%
\mathcal{W}^{s,p\left( .,.\right) }\left( \Omega \right) $ to $\mathcal{W}%
^{s,p\left( .,.\right) }\left( 
\mathbb{R}
^{n}\right) $. As an application we study complemented subspaces in $%
\mathcal{W}^{s,p\left( .,.\right) }$ via the trace operator.
\keywords{extension operator \and trace operator \and fractional variable
exponent Sobolev spaces \and complemented subspace problem.}
\end{abstract}

\section{Introduction}
\label{intro}
Sobolev spaces are very interesting mathematical structures in their own
right, but their principal significative lies in the central role they, and
their numerous generalizations and applications. So it is necessary to
develop a reasonable abstract body related to these spaces. In particular
the problem of how to extend Sobolev functions was recognized early in the
development of the theory of the Sobolev spaces. In this direction many
people were interested in determining the exention of the Sobolev functions
we mention in particular the works of Sobolev (\cite{sobo1}, \cite{sobo2}),
of Deny and Lions (\cite{DL}), of Gagliardo (\cite{Gaglia}) and of the
authors in (\cite{Slo}, \cite{As}, \cite{Liz},\cite{Ste}). Calder\'{o}n(\cite%
{Cal}), Stein(\cite{Ste67}) and Jones (\cite{Jon})\ studied the problem of
extension in $\mathcal{W}^{k,p}\left( \Omega \right) $ for $k\in 
\mathbb{N}
$ and $1<p<\infty $. For the fractional Sobolev Spaces $\mathcal{W}%
^{s,p}\left( \Omega \right) $ where $0<s<1$ we cite (\cite{ED}). For
variable exponent Sobolev functions $\mathcal{W}^{k,p\left( .\right) }\left(
\Omega \right) $ there exists a extension operator $\mathcal{E}$ from $%
\mathcal{W}^{k,p\left( .\right) }\left( \Omega \right) $ to $\mathcal{W}%
^{k,p\left( .\right) }\left( 
\mathbb{R}
^{n}\right) $ (see \cite{LD}). All these previous results are holds under
certain crucial regularity assumptions on the domain $\Omega $. The
objective of this paper is to study the problem of extension in the
fractional Sobolev spaces with variable exponent $\mathcal{W}^{s,p\left(
.,.\right) }\left( \Omega \right) $ and its relation with the trace
operator. Precisely we show that the existence of an extension operator
Implies that:
\begin{itemize}
\item the trace operator is surjective,

\item the kernel of the trace operator is complemented in $\mathcal{W}%
^{s,p\left( .,.\right) }\left( 
\mathbb{R}
^{n}\right) $.
\end{itemize}

First motivation of this paper is the following extension problem: if $%
\Omega \subset 
\mathbb{R}
^{n}$ is a domain with regular geometry of $\partial \Omega $ and $u\in 
\mathcal{W}^{s,p\left( .,.\right) }\left( \Omega \right) $, can be extend $u$
to the whole of $%
\mathbb{R}
^{n}$?. Second motivation of studying these spaces is that solutions of
partial differential equations belong naturally to Sobolev spaces, and in
particular for the study of partial differential equations related to the $%
p\left( .\right) -$Laplacian operator, it can be much more fruitful to study
weaker forms of equations in Sobolev spaces $\mathcal{W}^{k;p\left( .\right)
}$, where notions of smoothness are relaxed to require only the existence of
weak derivatives. For the nonlocal fractional $p\left( .\right) -$Laplacian
operator, we constrain functions in a fractional variable exponent Sobolev
spaces $\mathcal{W}^{s,p\left( .,.\right) }\left( \Omega \right) $ where $%
s\in \left( 0,1\right) $. Variable exponent spaces $L^{p\left( .\right)
}\left( \Omega \right) $ and $\mathcal{W}^{k,p\left( .\right) }\left( \Omega
\right) $ with $k\in 
\mathbb{N}
$ have been studied in many papers; for surveys see (\cite{LD},\cite{DCruze},%
\cite{meshki}).

This paper consists of four sections. After an introduction, we give a
notation and preliminaries used throughout this paper. In section three
under certain regularity assumptions on the domain $\Omega $\ we prove the
existence of the linear extension operator $%
\begin{array}{ccc}
\mathcal{E}: & \mathcal{W}^{s;p\left( .,.\right) }\left( \Omega \right)
\rightarrow & \mathcal{W}^{s;p\left( .,.\right) }\left( 
\mathbb{R}
^{n}\right)%
\end{array}%
$ such that for all $u\in \mathcal{W}^{s;p\left( .,.\right) }\left( \Omega
\right) $, $\mathcal{E}u\mid _{\Omega }=u$. Firstly we prove the existence
when the function $u$ is identically zero in a neighborhood of the boundary $%
\partial \Omega $, after that we prove the extension theorem for any domain
satisfying certain regularity assumptions. In the last section, we define a
trace operator in fractional Sobolev spaces with variable exponent and we
prove that his kernel is complemented in $\mathcal{W}^{s,p\left( .,.\right)
}\left( 
\mathbb{R}
^{n}\right) $.
\section{Notation and preliminaries}
\label{sec:1}
Throughout of this paper we will use the following notation: ${\mathbb{R}}%
^{n}$ is the real Euclidean $n-$space. $\Omega $ is a open subset of ${%
\mathbb{R}}^{n}$. For $E\subset {\mathbb{R}}^{n}$ measurable we denote by $%
\left\vert E\right\vert $ the Lebesgue measure of $E$. If $A\subset {\mathbb{%
R}}^{n}$ we denote by $\partial A$ the topological boundary of $A$, by $%
\overline{A}$ the closure of $A$ and $d=diam(A)=sup\{|x-y|:x,y\in A\}$
denote the diameter of a set $A$. $C$ will denote a constant which may
change even in single string of an estimate. By ${supp}f$ $\ $we
denote the support of the function $f$.

We say that a closed subspace $Y$ of a Banach space $X$ is complemented if
there is another closed subspace $Z$ of $X$ such that $X=Y\oplus Z$. That
is, $Y\cap Z=\{0\}$ and every element $x\in X$ can be written as $x=y+z$,
with $y\in Y$ and $z\in Z$. By $Y^{\perp }$ we denote the orthogonal of $Y$.
For an operator $L$ we denote by $KerL$ the kernel of $L$.

We say that an open set $D\subset {\mathbb{R}}^{n}$ is of class $C^{1}$ if
for every $x\in \partial D$ there exist a neighborhood $\emph{U}$ of $\ x\in 
\mathbb{R}
^{n}$ and a bijective map $%
\begin{array}{ccc}
H: & Q\mathbb{\rightarrow } & \emph{U}%
\end{array}%
$ such that:

\begin{equation*}
H\in C^{1}\left( \overline{Q}\right) ,H^{-1}\in C^{1}\left( \overline{\emph{U%
}}\right) ,H\left( Q_{+}\right) =\emph{U}\cap Q,\text{ and }H\left(
Q_{0}\right) =\emph{U}\cap \partial D\text{,}
\end{equation*}
where for given $x\in {\mathbb{R}}^{n}$ write $x=\left( x^{^{\prime
}},x_{n}\right) $ with $x^{^{\prime }}\in $ ${\mathbb{R}}^{n-1}$ and $%
x^{^{\prime }}=\left( x_{1},......,x_{n-1}\right) $,

$\left\vert x^{^{\prime }}\right\vert =\left(
\sum_{i=1}^{n-1}x_{i}^{2}\right) ^{\frac{1}{2}},$ $\ \ \ \ \ \ \ $

$\ {\mathbb{R}}_{+}^{n}:=\left\{ x=\left( x^{^{\prime }},x_{n}\right)
;x_{n}>0\right\} ,$

$Q:=\left\{ x=\left( x^{^{\prime }},x_{n}\right) ;\left\vert x^{^{\prime
}}\right\vert <1\text{ and }\left\vert x_{n}\right\vert <1\text{ }\right\} ,$

$Q_{+}:=Q\mathbb{\cap }{\mathbb{R}}_{+}^{n},$

$Q_{0}:=\left\{ x=\left( x^{^{\prime }},0\right) ;\left\vert x^{^{\prime
}}\right\vert <1\right\} $. The map $H$ is called a local chart.

For an open set $D\subset {\mathbb{R}}^{n}$, we denote by $\mathcal{C}%
^{k}\left( D\right) $ the set of functions with $k$-th derivative is
continuous for $k$ positive integer, $\mathcal{C}^{\infty }\left( D\right)
=\cap _{k\geqslant 1}\mathcal{C}^{k}\left( D\right) $ and by $\mathcal{C}%
_{0}^{\infty }\left( D\right) $ the set of all functions in $\mathcal{C}%
^{\infty }\left( D\right) $ with compact support.

We will next introduce variable exponent Lebesgue spaces and fractional
exponent Sobolev spaces; note that we nevertheless use the standard
definitions of the spaces $L^{p}\left( D\right) $ and $\mathcal{W}%
^{s,p}\left( D\right) $ in the fixed exponent case with open $D\subset 
\mathbb{R}
^{n}$.

Let $\left( \mathcal{A},\sum ,\mu \right) $ be a $\sigma -$finite complete
mesure space and $%
\begin{array}{ccc}
p: & \mathcal{A}\rightarrow & \left[ 1,\infty \right)%
\end{array}%
$ be a $\mu -$mesurable function (called \textit{the variable exponent} on $%
\mathcal{A}$). We define $p^{+}:=p_{\mathcal{A}}^{+}:={esssup}
_{x\in \mathcal{A}}$ $p\left( x\right) $, $p^{-}:=p_{\mathcal{A}%
}^{-}:={essinf}_{x\in \mathcal{A}}$ $p\left( x\right) $, $%
\alpha :=\min \left\{ 1,\min \left\{ p^{+},p^{-}\right\} \right\} $ and $%
\beta :=\max \left\{ 1,\max \left\{ p^{+},p^{-}\right\} \right\} $ . We say
that $p$ is a bounded variable exponent if $p^{+}<\infty $. By $\mathcal{M}%
\left( \mathcal{A},\mu \right) $ we denote the space of all measurable $\mu
- $function $%
\begin{array}{ccc}
u: & \mathcal{A}\rightarrow & 
\mathbb{R}%
\end{array}%
$. The variable exponent \textit{Lesbegue} $L^{p\left( .\right) }\left( 
\mathcal{A},\mu \right) $ is defined as follows: 
\begin{equation*}
{\small L}^{p\left( .\right) }\left( \mathcal{A},\mu \right) {\small :=}%
\left\{ 
\begin{array}{c}
u\in \mathcal{M}\left( \mathcal{A},\mu \right) :\rho _{p\left( .\right)
}\left( \lambda u\right) =\int_{\mathcal{A}}\left\vert \lambda u\right\vert
^{p\left( x\right) }dx<\infty \text{,} \\ 
\text{ for some }\lambda >0%
\end{array}%
\right\} \text{,}
\end{equation*}%
where the function $%
\begin{array}{ccc}
\rho _{p\left( .\right) }: & L^{p\left( .\right) }\left( \mathcal{A},\mu
\right) \rightarrow & \left[ 0,\infty \right)%
\end{array}%
$ is called \textit{the modular }of the space $L^{p\left( .\right) }\left( 
\mathcal{A},\mu \right) $. We define a norm, the so-called \textit{Luxemburg
norm}, on this space by the formula 
\begin{equation*}
\left\Vert u\right\Vert _{p\left( .\right) }=\inf \left\{ \lambda >0:\rho
_{p\left( .\right) }\left( \lambda u\right) \leq 1\right\} ,
\end{equation*}%
which makes $L^{p\left( .\right) }\left( \mathcal{A},\mu \right) $ a Banach
space. In the classical case when $\mu $ is the $n$-\textit{Lesbegue }%
mesure, $\Omega $ is a smooth bounded domain of $%
\mathbb{R}
^{n}$, $\sum $ is the $\sigma $-algebra of $\mu $-measurable subsets of $%
\Omega $ and $p\in \mathcal{M}\left( \Omega ,\mu \right) $ is bounded
variable exponent, we simply denote $L^{p\left( .\right) }\left( \Omega ,\mu
\right) $ by $L^{p\left( .\right) }\left( \Omega \right) $. We denote by $%
L^{p^{^{\prime }}\left( .\right) }\left( \Omega \right) $ the conjugate
space of $L^{p\left( .\right) }\left( \Omega \right) $ where $\frac{1}{%
p\left( x\right) }+\frac{1}{p^{^{\prime }}\left( x\right) }=1$, we have the
following theorem where his proof is in (\cite{LD}):

\begin{theorem}
(H\"{o}lder's inequality ) For any $u\in L^{p\left( .\right) }\left( \Omega
\right) $ and $v\in L^{p^{^{\prime }}\left( .\right) }\left( \Omega \right) $%
, we have 
\begin{equation*}
\left\vert \int_{\Omega }uvdx\right\vert \leq 2\left\Vert u\right\Vert
_{L^{p\left( .\right) }\left( \Omega \right) }\left\Vert v\right\Vert
_{L^{p^{^{\prime }}\left( .\right) }\left( \Omega \right) }.
\end{equation*}
\end{theorem}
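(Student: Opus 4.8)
The plan is to reduce the estimate to the pointwise Young inequality after a normalization. First I would dispose of the degenerate cases: if $\|u\|_{L^{p(\cdot)}(\Omega)}=0$ then $u=0$ almost everywhere, so $\int_{\Omega}uv\,dx=0$ and the inequality is trivial, and likewise if $\|v\|_{L^{p'(\cdot)}(\Omega)}=0$. Hence I may assume both norms are strictly positive, and since both sides of the claimed inequality are positively homogeneous of degree one in $u$ and in $v$, it suffices to prove $\int_{\Omega}|uv|\,dx\le 2$ under the normalization $\|u\|_{L^{p(\cdot)}(\Omega)}=\|v\|_{L^{p'(\cdot)}(\Omega)}=1$.

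The key preparatory fact I would use is the \emph{unit ball property} of the Luxemburg norm: if $0<\|u\|_{L^{p(\cdot)}(\Omega)}<\infty$ then $\rho_{p(\cdot)}\left(u/\|u\|_{L^{p(\cdot)}(\Omega)}\right)\le 1$; in particular, after the normalization above, $\int_{\Omega}|u|^{p(x)}\,dx\le 1$ and $\int_{\Omega}|v|^{p'(x)}\,dx\le 1$. To see this, pick $\lambda_k\downarrow\|u\|_{L^{p(\cdot)}(\Omega)}$ with $\rho_{p(\cdot)}(u/\lambda_k)\le 1$ (such $\lambda_k$ exist by the definition of the infimum). Since $p(x)>0$, the integrands $|u(x)/\lambda_k|^{p(x)}$ increase pointwise to $|u(x)/\|u\|_{L^{p(\cdot)}(\Omega)}|^{p(x)}$, so by the monotone convergence theorem $\rho_{p(\cdot)}\left(u/\|u\|_{L^{p(\cdot)}(\Omega)}\right)=\lim_k\rho_{p(\cdot)}(u/\lambda_k)\le 1$; this is exactly the place where boundedness of the exponent is used.

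Next I apply Young's inequality pointwise: for almost every $x\in\Omega$, since $\frac{1}{p(x)}+\frac{1}{p'(x)}=1$,
\[
|u(x)\,v(x)|\;\le\;\frac{|u(x)|^{p(x)}}{p(x)}+\frac{|v(x)|^{p'(x)}}{p'(x)}\;\le\;|u(x)|^{p(x)}+|v(x)|^{p'(x)},
\]
where in the last step I used $p(x)\ge 1$ and $p'(x)\ge 1$. Integrating over $\Omega$ and invoking the two modular bounds from the previous step yields $\int_{\Omega}|uv|\,dx\le 1+1=2$. Since $\left|\int_{\Omega}uv\,dx\right|\le\int_{\Omega}|uv|\,dx$, this is the normalized inequality; undoing the normalization gives $\left|\int_{\Omega}uv\,dx\right|\le 2\,\|u\|_{L^{p(\cdot)}(\Omega)}\|v\|_{L^{p'(\cdot)}(\Omega)}$.

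I expect the only genuine subtlety to be the unit ball property — passing from ``$\lambda$ is an admissible value in the infimum'' for $\lambda$ slightly larger than $\|u\|_{L^{p(\cdot)}(\Omega)}$ to the conclusion that $\|u\|_{L^{p(\cdot)}(\Omega)}$ itself is admissible, which needs the monotone convergence argument above rather than anything deeper. Everything else (the degenerate cases, the scaling reduction, and the termwise Young estimate) is routine; the constant $2$ arises precisely from bounding each of $\frac{1}{p(x)}$ and $\frac{1}{p'(x)}$ by $1$ before integrating, since with a genuinely variable exponent one cannot factor these weights out of the integral as in the constant-exponent case. (If $p$ is allowed to equal $1$ on a set of positive measure one should read $p'$ as $\infty$ there and handle that portion with the obvious $L^{\infty}$ bound; I would state and use the result for $1<p^{-}\le p^{+}<\infty$, which is the regime needed in the sequel.)
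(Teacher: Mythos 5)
Your proof is correct: after discarding the degenerate cases and normalizing, the unit-ball property of the Luxemburg norm (obtained by monotone convergence along $\lambda_k\downarrow \|u\|_{L^{p(\cdot)}(\Omega)}$) together with the pointwise Young inequality yields exactly the stated bound with constant $2$. The paper itself gives no proof of this theorem but only cites (\cite{LD}), and your argument is essentially the standard one found there; the only quibble is that the monotone-convergence step needs merely $p(x)<\infty$ almost everywhere rather than boundedness of the exponent, and the constant could be sharpened to $\frac{1}{p^{-}}+\frac{1}{(p')^{-}}\leq 2$.
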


The variable exponent \textit{Sobolev }space $\mathcal{W}^{1,p\left(
.\right) }\left( \Omega \right) $ is the space of all measurable function $%
\begin{array}{ccc}
u: & \Omega \rightarrow & 
\mathbb{R}%
\end{array}%
$ such that $u$ and the absolute value of distributional gradient $\nabla
u=\left( \partial _{1}u,....,\partial _{n}u\right) $ is in $L^{p\left(
.\right) }\left( \Omega \right) $. The norm $\left\Vert u\right\Vert _{%
\mathcal{W}^{1,p\left( .\right) }\left( \Omega \right) }=\left\Vert
u\right\Vert _{L^{p\left( .\right) }\left( \Omega \right) }+\left\Vert
\nabla u\right\Vert _{L^{p\left( .\right) }\left( \Omega \right) }$ makes $%
\mathcal{W}^{1,p\left( .\right) }\left( \Omega \right) $ a Banach space.

From now let $\Omega $ be a fixed smooth bounded domain in ${\mathbb{R}}^{n}$
and \ $0<s<1$. Let $p$ be a bounded \ continuous variable exponent in $%
\overline{\Omega }\times \overline{\Omega }$, $q$ be a bounded \ continuous
variable exponent in $\overline{\Omega }$ and $p^{\ast }\left( x\right) :=%
\frac{np\left( x,x\right) }{n-sp\left( x,x\right) }$ the fractional critical
variable Sobolev exponent with $p^{\ast }\left( x\right) >p^{+}q\left(
x\right) $ \ and $q\left( x\right) >$ $p\left( x,x\right) $ for $x\in 
\overline{\Omega }$. We want to define the fractional Sobolev spaces with
variable exponent $\mathcal{W}^{s,p\left( .,.\right) }\left( \Omega \right) $%
, for this we extend the definition given in (\cite{ED}) to the case of
variable exponent. So we define the fractional Sobolev spaces with variable
exponent $\mathcal{W}^{s,p\left( .,.\right) }\left( \Omega \right) $ as
follows:%
\begin{equation*}
\mathcal{W}^{s,p\left( .,.\right) }\left( \Omega \right) :=\left\{ u\in
L^{q\left( .\right) }\left( \Omega \right) :\frac{\left\vert u\left(
x\right) -u\left( y\right) \right\vert }{\left\vert x-y\right\vert ^{\frac{n%
}{p\left( x,y\right) }+s}}\in L^{p\left( .,.\right) }\left( \Omega \times
\Omega \right) \right\} \text{;}
\end{equation*}%
i.e, an intermediary Banach space between two Banach spaces $L^{q\left(
.\right) }\left( \Omega \right) $ and $\mathcal{W}^{1,p\left( .\right)
}\left( \Omega \right) $, endowed with the natural norm 
\begin{equation*}
\left\Vert u\right\Vert _{\mathcal{W}^{s,p\left( .,.\right) }\left( \Omega
\right) }:=\left\Vert u\right\Vert _{L^{q\left( .\right) }\left( \Omega
\right) }+\left[ u\right] _{\mathcal{W}^{s,p\left( .,.\right) }\left( \Omega
\right) },
\end{equation*}%
where 
\begin{equation*}
\left[ u\right] _{\mathcal{W}^{s,p\left( .,.\right) }\left( \Omega \right)
}:=\inf \left\{ \lambda >0:\int_{\Omega }\int_{\Omega }\frac{\left\vert
u\left( x\right) -u\left( y\right) \right\vert ^{p\left( x,y\right) }}{%
\lambda ^{p\left( x,y\right) }\left\vert x-y\right\vert ^{n+sp\left(
x,y\right) }}dxdy\leq 1\right\} ,
\end{equation*}%
is the semi norm of $u$.

For general theory of classical Sobolev spaces we refer the reader to (\cite%
{Adams}, \cite{Adams2}, \cite{B.O.T}, \cite{Breziz}, \cite{ED}, \cite{E.H}, 
\cite{V.I.BuREBKOV}) and for the Lesbegue and Sobolev with variable exponent
to (\cite{DCruze},\cite{LD},\cite{meshki}).

\begin{theorem}
\label{injec sobolev} Let $\Omega \subset $ $%
\mathbb{R}
^{n}$ be a smooth bounded domain. Assume that ${\small sp}\left( x,y\right) 
{\small <n}$ for $\left( x,y\right) \in $ $\overline{\Omega }\times 
\overline{\Omega }$, $q\left( x\right) >$ $p\left( x,x\right) $ for $\ x\in $
$\overline{\Omega }$ \ and \ $%
\begin{array}{ccc}
r: & \overline{\Omega } & \rightarrow \left( 1,\infty \right)%
\end{array}%
$ is a continous function such that $p^{\ast }\left( x\right) >r\left(
x\right) \geq r^{-}>1$, for $x\in \overline{\Omega }$. Then the space $%
\mathcal{W}^{s,p\left( .,.\right) }\left( \Omega \right) $ is continously
embedded in $L^{r\left( x\right) }\left( \Omega \right) $ for any $r\in
\left( 1,p^{\ast }\right) $.
\end{theorem}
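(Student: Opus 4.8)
\medskip
\noindent\emph{Proof (sketch).}
The plan is to localize and reduce to the constant‑exponent fractional Sobolev embedding of \cite{ED}, using the continuity of $p,q,r$ on the compact set $\overline{\Omega}$ and — decisively — the strict inequality $r(x)<p^{\ast}(x)$, which leaves a uniform amount of room below the critical exponent. I will use freely the equivalence between Luxemburg norms and modulars, the embedding $L^{a(\cdot)}\hookrightarrow L^{b(\cdot)}$ on finite‑measure sets whenever $b\le a$ a.e., and the identity $[u]_{\mathcal{W}^{s,p(\cdot,\cdot)}(\Omega)}=\|g_{u}\|_{L^{p(\cdot,\cdot)}(\Omega\times\Omega)}$, where $g_{u}(x,y):=|u(x)-u(y)|\,|x-y|^{-n/p(x,y)-s}$ is the Gagliardo kernel (immediate from the definition of the seminorm). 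First I would cover $\overline{\Omega}$ by finitely many balls $B_{1},\dots,B_{N}$ centered at points $x_{i}\in\overline{\Omega}$, of radius so small that $\mathrm{diam}\,B_{i}\le1$, the oscillation of $p$ on $(B_{i}\cap\overline{\Omega})^{2}$ is as small as we wish, and, setting $p_{i}^{-}:=\min_{(B_{i}\cap\overline{\Omega})^{2}}p$, $p_{i}^{+}:=\max_{(B_{i}\cap\overline{\Omega})^{2}}p$, $\varrho_{i}:=\max_{B_{i}\cap\overline{\Omega}}r$, one has $p_{i}^{-}\le q$ on $B_{i}\cap\overline{\Omega}$ (because $q>p(\cdot,\cdot)\ge p_{i}^{-}$ there) and, having chosen $s_{i}'\in(0,s)$ with $s-s_{i}'>n\big(\tfrac1{p_{i}^{-}}-\tfrac1{p_{i}^{+}}\big)$,
\[
\varrho_{i}\ <\ (p_{i}^{-})^{\ast}_{s_{i}'}\ :=\ \frac{n\,p_{i}^{-}}{\,n-s_{i}'\,p_{i}^{-}\,}.
\]
This last requirement is exactly where $r<p^{\ast}$ is spent: as the radius shrinks, $p_{i}^{\pm}\to p(x_{i},x_{i})$ and the admissible $s_{i}'$ may be taken $\to s$, so $(p_{i}^{-})^{\ast}_{s_{i}'}\to p^{\ast}(x_{i})$ while $\varrho_{i}\to r(x_{i})<p^{\ast}(x_{i})$; by compactness one radius works for every $i$ (and $s_{i}'p_{i}^{-}<sp(x_{i},x_{i})<n$, so the critical exponent makes sense).

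The heart of the argument is the local estimate: I would show that every $u\in\mathcal{W}^{s,p(\cdot,\cdot)}(\Omega)$ restricts to an element of the \emph{constant}‑exponent space $\mathcal{W}^{s_{i}',p_{i}^{-}}(B_{i}\cap\Omega)$ with $\|u\|_{\mathcal{W}^{s_{i}',p_{i}^{-}}(B_{i}\cap\Omega)}\le C\|u\|_{\mathcal{W}^{s,p(\cdot,\cdot)}(\Omega)}$. The $L^{p_{i}^{-}}$‑part is immediate from $u\in L^{q(\cdot)}(\Omega)$, $p_{i}^{-}\le q$ and $|\Omega|<\infty$. For the Gagliardo seminorm, the constant‑exponent kernel factors as $h_{u}(x,y)=|u(x)-u(y)|\,|x-y|^{-n/p_{i}^{-}-s_{i}'}=g_{u}(x,y)\,|x-y|^{\gamma(x,y)}$ with
\[
\gamma(x,y)=(s-s_{i}')+n\Big(\tfrac1{p(x,y)}-\tfrac1{p_{i}^{-}}\Big)\ \ge\ (s-s_{i}')-n\Big(\tfrac1{p_{i}^{-}}-\tfrac1{p_{i}^{+}}\Big)\ >\ 0
\]
on $(B_{i}\cap\Omega)^{2}$, by the choice of $s_{i}'$. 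Since $|x-y|\le\mathrm{diam}\,B_{i}\le1$ and $\gamma>0$ we get $|x-y|^{\gamma(x,y)}\le1$, hence $h_{u}\le g_{u}$ pointwise, so
\[
[u]_{\mathcal{W}^{s_{i}',p_{i}^{-}}(B_{i}\cap\Omega)}=\|h_{u}\|_{L^{p_{i}^{-}}((B_{i}\cap\Omega)^{2})}\ \le\ \|g_{u}\|_{L^{p_{i}^{-}}((B_{i}\cap\Omega)^{2})}\ \le\ C\,\|g_{u}\|_{L^{p(\cdot,\cdot)}(\Omega^{2})}=C\,[u]_{\mathcal{W}^{s,p(\cdot,\cdot)}(\Omega)},
\]
the middle step being $L^{p(\cdot,\cdot)}\hookrightarrow L^{p_{i}^{-}}$ on the finite‑measure set $(B_{i}\cap\Omega)^{2}$. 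I expect this step to be the main obstacle, and the essential idea is that one must \emph{not} remain at differentiability index $s$: it is only by giving up a little smoothness ($s\rightsquigarrow s_{i}'$) that the exponent can be frozen at the smaller constant $p_{i}^{-}$ while the near‑diagonal behaviour of the kernel stays under control — and making that trade affordable (so that every $r<p^{\ast}$ is still reached) is precisely the bookkeeping of the localization step.

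To conclude, since $\Omega$ is smooth and bounded each $B_{i}\cap\Omega$ is a bounded Lipschitz domain, hence an extension domain for the constant‑exponent fractional Sobolev space; the classical embedding (\cite{ED}) together with $\varrho_{i}<(p_{i}^{-})^{\ast}_{s_{i}'}$ gives $\|u\|_{L^{\varrho_{i}}(B_{i}\cap\Omega)}\le C\|u\|_{\mathcal{W}^{s_{i}',p_{i}^{-}}(B_{i}\cap\Omega)}\le C\|u\|_{\mathcal{W}^{s,p(\cdot,\cdot)}(\Omega)}$. By homogeneity assume $\|u\|_{\mathcal{W}^{s,p(\cdot,\cdot)}(\Omega)}=1$, so $\|u\|_{L^{\varrho_{i}}(B_{i}\cap\Omega)}\le C$ for all $i$; then, using $\mathbf{1}_{\Omega}\le\sum_{i}\mathbf{1}_{B_{i}\cap\Omega}$, $r\le\varrho_{i}$ on $B_{i}$, and $t^{r(x)}\le1+t^{\varrho_{i}}$ for $t\ge0$,
\[
\int_{\Omega}|u(x)|^{r(x)}\,dx\ \le\ \sum_{i}\Big(|B_{i}\cap\Omega|+\|u\|_{L^{\varrho_{i}}(B_{i}\cap\Omega)}^{\varrho_{i}}\Big)\ \le\ N\big(|\Omega|+\max\{1,C\}^{\,r^{+}}\big)\ <\ \infty .
\]
By the relation between the modular and the Luxemburg norm this bounds $\|u\|_{L^{r(\cdot)}(\Omega)}$, and rescaling yields $\|u\|_{L^{r(\cdot)}(\Omega)}\le C\|u\|_{\mathcal{W}^{s,p(\cdot,\cdot)}(\Omega)}$, the asserted continuous embedding.
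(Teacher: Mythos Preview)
The paper does not actually prove this theorem; its entire proof is the one-line citation ``we find it in \cite{URIEL}'' (Kaufmann--Rossi--Vidal). Your sketch therefore goes well beyond what the paper offers, supplying a self-contained argument. The strategy you outline --- localize by uniform continuity of $p,q,r$ on the compact $\overline{\Omega}$, freeze the exponent at the constant $p_{i}^{-}$ on each small patch at the price of dropping the smoothness from $s$ to some $s_{i}'<s$, invoke the constant-exponent fractional embedding of \cite{ED} on each $B_{i}\cap\Omega$, and reassemble via the modular --- is correct and is in fact essentially the method of the cited reference \cite{URIEL}. Your explicit bookkeeping of the trade $s\rightsquigarrow s_{i}'$ via the pointwise inequality $h_{u}\le g_{u}$ (because $\gamma>0$ and $|x-y|\le 1$) is exactly the right mechanism, and your use of the strict gap $r<p^{\ast}$ to secure $\varrho_{i}<(p_{i}^{-})^{\ast}_{s_{i}'}$ after shrinking is where the subcriticality hypothesis is genuinely consumed. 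The only point that would need a line of justification in a full write-up is that each $B_{i}\cap\Omega$ is an extension domain for the constant-exponent space (true for sufficiently small balls against a $C^{1}$ boundary, since the intersection is then bi-Lipschitz to a half-ball), so that the embedding from \cite{ED} is available.
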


\begin{proof}
we find it in (\cite{URIEL}).\qed
\end{proof}

\begin{lemma}
\label{Estimat integral}Let $\Omega $ be a smooth bounded domain in $%
\mathbb{R}
^{n}$. Then there exists a suitable positive constant $C$ such that%
\begin{equation*}
\int_{\Omega }\int_{\Omega }\frac{\left\vert u\left( x\right) -u\left(
y\right) \right\vert ^{p\left( x,y\right) }}{\left\vert x-y\right\vert
^{n+sp\left( x,y\right) }}dxdy\leq C\left( \left\Vert u\right\Vert _{%
\mathcal{W}^{s,p\left( .,.\right) }\left( \Omega \right)
}^{p^{+}}+\left\Vert u\right\Vert _{\mathcal{W}^{s,p\left( .,.\right)
}\left( \Omega \right) }^{p^{-}}\right)
\end{equation*}%
for all $u$ $\in $ $\mathcal{W}^{s,p\left( .,.\right) }\left( \Omega \right) 
$.
\end{lemma}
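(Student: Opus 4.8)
The plan is to relate the double integral (which is the $p(\cdot,\cdot)$-modular of the Gagliardo kernel, without the normalizing $\lambda$) to the Luxemburg semi-norm $[u]_{\mathcal{W}^{s,p(.,.)}(\Omega)}$, and then bound that semi-norm by the full norm. First I would recall the elementary modular–norm relations in a variable exponent Lebesgue space $L^{p(\cdot)}$ with bounded exponent: if $\|f\|_{L^{p(\cdot)}}\le 1$ then $\rho_{p(\cdot)}(f)\le \|f\|_{L^{p(\cdot)}}^{p^{-}}$, while if $\|f\|_{L^{p(\cdot)}}\ge 1$ then $\rho_{p(\cdot)}(f)\le \|f\|_{L^{p(\cdot)}}^{p^{+}}$; in either case $\rho_{p(\cdot)}(f)\le \|f\|_{L^{p(\cdot)}}^{p^{+}}+\|f\|_{L^{p(\cdot)}}^{p^{-}}$. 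I would apply this on the product space $\Omega\times\Omega$ with the measure $\dfrac{dx\,dy}{|x-y|^{n}}$ (or, equivalently, absorb the $|x-y|^{-n-sp(x,y)}$ weight into the function) to the function
\[
f(x,y):=\frac{|u(x)-u(y)|}{|x-y|^{\frac{n}{p(x,y)}+s}},
\]
whose $L^{p(.,.)}(\Omega\times\Omega)$-norm is exactly $[u]_{\mathcal{W}^{s,p(.,.)}(\Omega)}$ and whose modular is exactly the left-hand side of the claimed inequality.

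Concretely, the key steps in order are: (1) observe that $\displaystyle\int_{\Omega}\int_{\Omega}\frac{|u(x)-u(y)|^{p(x,y)}}{|x-y|^{n+sp(x,y)}}\,dx\,dy=\rho_{p(.,.)}(f)$ with $f$ as above; (2) invoke the modular inequality $\rho_{p(.,.)}(f)\le \|f\|_{L^{p(.,.)}(\Omega\times\Omega)}^{p^{+}}+\|f\|_{L^{p(.,.)}(\Omega\times\Omega)}^{p^{-}}$, valid since $p$ is a bounded exponent and $1\le p^{-}\le p^{+}<\infty$; (3) identify $\|f\|_{L^{p(.,.)}(\Omega\times\Omega)}=[u]_{\mathcal{W}^{s,p(.,.)}(\Omega)}$; (4) use $[u]_{\mathcal{W}^{s,p(.,.)}(\Omega)}\le \|u\|_{\mathcal{W}^{s,p(.,.)}(\Omega)}$ together with the monotonicity of $t\mapsto t^{p^{+}}+t^{p^{-}}$ on $[0,\infty)$ to conclude that the whole expression is bounded by $C\big(\|u\|_{\mathcal{W}^{s,p(.,.)}(\Omega)}^{p^{+}}+\|u\|_{\mathcal{W}^{s,p(.,.)}(\Omega)}^{p^{-}}\big)$, with $C=1$ in fact (or any convenient constant if one prefers a cruder bound).

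The only genuinely delicate point — and the place I would be most careful — is making sure the modular–norm comparison is stated and applied correctly on the \emph{product} space with the singular weight. One must check that the map $f\mapsto \rho_{p(.,.)}(f)$ used in the definition of $[u]_{\mathcal{W}^{s,p(.,.)}(\Omega)}$ is literally the modular of $L^{p(.,.)}(\Omega\times\Omega,\nu)$ for the appropriate measure $\nu$ (here the weighted measure built from $|x-y|^{-n}\,dx\,dy$, with the remaining $|x-y|^{-sp(x,y)}$ folded into $f$), so that the standard unit-ball property ``$\|f\|\le 1\iff \rho(f)\le 1$'' and the one-sided bounds above apply verbatim. Once that bookkeeping is pinned down, the estimate is immediate; there is no hard analysis, only the correct invocation of the variable-exponent modular inequalities from the references (\cite{LD}). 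I would close by remarking that the same argument gives the reverse-type control $[u]^{p^{+}}_{\mathcal{W}^{s,p(.,.)}(\Omega)}\le \cdots$ if needed elsewhere, but only the stated direction is required here.
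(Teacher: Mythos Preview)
Your argument is correct and is genuinely different from --- and considerably simpler than --- the paper's. You use only the abstract modular--norm comparison in a variable-exponent Lebesgue space: since the Gagliardo seminorm $[u]_{\mathcal{W}^{s,p(.,.)}(\Omega)}$ is by definition the Luxemburg norm associated with the modular on the left-hand side, the inequality $\rho_{p(.,.)}(f)\le \|f\|^{p^{+}}+\|f\|^{p^{-}}$ (valid whenever $p^{+}<\infty$) immediately gives the claim with $C=1$, and no hypothesis on $\Omega$ beyond measurability is needed.

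The paper instead discards the difference structure at the outset via $|u(x)-u(y)|^{p(x,y)}\le 2^{p^{+}-1}(|u(x)|^{p(x,y)}+|u(y)|^{p(x,y)})$, then splits $\Omega\times\Omega$ into the regions $\{|x-y|\ge 1\}$ and $\{|x-y|<1\}$, integrates the kernel in $y$ in each region, applies H\"older's inequality, and finally invokes the Sobolev embedding Theorem~\ref{injec sobolev} to bound $\|u\|_{L^{p^{\pm}q(\cdot)}(\Omega)}$ by $\|u\|_{\mathcal{W}^{s,p(.,.)}(\Omega)}$. That route genuinely needs $\Omega$ smooth and bounded (for the embedding) and the standing assumption $p^{\ast}(x)>p^{+}q(x)$, and it produces a constant depending on $n,s,p^{\pm},|\Omega|,\operatorname{diam}\Omega$. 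Your approach avoids all of this machinery; the paper's detour buys nothing extra here.
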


\begin{proof}
Let $u$ $\in \mathcal{W}^{s,p\left( .,.\right) }\left( \Omega \right) $,
then;%
\begin{eqnarray*}
&&\int_{\Omega }\int_{\Omega }\frac{\left\vert u\left( x\right) -u\left(
y\right) \right\vert ^{p\left( x,y\right) }}{\left\vert x-y\right\vert
^{n+sp\left( x,y\right) }}dxdy \\
&\leq &2^{^{p^{+}-1}}\int_{\Omega }\int_{\Omega }\frac{\left\vert u\left(
x\right) \right\vert ^{p\left( x,y\right) }+\left\vert u\left( y\right)
\right\vert ^{p\left( x,y\right) }}{\left\vert x-y\right\vert ^{n+sp\left(
x,y\right) }}dxdy \\
&\leq &2^{^{p^{+}}}\int_{\Omega }\int_{\Omega }\frac{\left\vert u\left(
x\right) \right\vert ^{p\left( x,y\right) }}{\left\vert x-y\right\vert
^{n+sp\left( x,y\right) }}dxdy \\
&\leq &2^{^{p^{+}}}\int_{\Omega }\int_{\Omega \cap \left\{ \left\vert
x-y\right\vert \geq 1\right\} }\frac{\left\vert u\left( x\right) \right\vert
^{p\left( x,y\right) }}{\left\vert x-y\right\vert ^{n+sp\left( x,y\right) }}%
dxdy \\
&&+2^{^{p^{+}}}\int_{\Omega }\int_{\Omega \cap \left\{ \left\vert
x-y\right\vert <1\right\} }\frac{\left\vert u\left( x\right) \right\vert
^{p\left( x,y\right) }}{\left\vert x-y\right\vert ^{n+sp\left( x,y\right) }}%
dxdy \\
&\leq &2^{^{p^{+}}}\int_{\Omega }\left( \int_{\left\{ \left\vert
x-y\right\vert \geq 1\right\} }\frac{dy}{\left\vert x-y\right\vert
^{n+sp\left( x,y\right) }}\right) \left\vert u\left( x\right) \right\vert
^{p\left( x,y\right) }dx \\
&&+2^{^{p^{+}}}\int_{\Omega }\left( \int_{\left\{ \left\vert x-y\right\vert
<1\right\} }\frac{dy}{\left\vert x-y\right\vert ^{n+sp\left( x,y\right) }}%
\right) \left\vert u\left( x\right) \right\vert ^{p\left( x,y\right) }dx \\
&\leq &2^{^{p^{+}}}\int_{\Omega }\left( \int_{\left\{ \left\vert
x-y\right\vert \geq 1\right\} }\frac{dy}{\left\vert x-y\right\vert
^{n+sp\left( x,y\right) }}\right) \left\vert u\left( x\right) \right\vert
^{p\left( x,y\right) }dx \\
&&+2^{^{p^{+}}}\int_{\Omega }\left( \int_{\left\{ \left\vert x-y\right\vert
<1\right\} }\frac{dy}{\left\vert x-y\right\vert ^{n+\left( s-1\right)
p\left( x,y\right) }}\right) \frac{\left\vert u\left( x\right) \right\vert
^{p\left( x,y\right) }}{\left\vert x-y\right\vert ^{p\left( x,y\right) }}dx
\\
&\leq &2^{^{p^{+}}}\int_{\Omega }\left( \int_{\left\{ \left\vert
x-y\right\vert \geq 1\right\} }\frac{dy}{\left\vert x-y\right\vert
^{n+sp\left( x,y\right) }}\right) \left( \left\vert u\left( x\right)
\right\vert ^{p^{+}}+\left\vert u\left( x\right) \right\vert ^{p^{-}}\right)
dx+ \\
&&2^{^{p^{+}}}\max \left( d^{^{-p^{+}}},d^{^{-p^{-}}}\right) \int_{\Omega
}\left( \int_{\left\{ \left\vert x-y\right\vert <1\right\} }\frac{dy}{%
\left\vert x-y\right\vert ^{n+\left( s-1\right) p\left( x,y\right) }}\right)
\times \\
&&\left( \left\vert u\left( x\right) \right\vert ^{p^{+}}+\left\vert u\left(
x\right) \right\vert ^{p^{-}}\right) dx \\
&\leq &2C\left( n,s,p^{+}\right) \left\Vert 1\right\Vert _{L^{q^{^{\prime
}}\left( .\right) _{\left( \Omega \right) }}}\left( \left\Vert u\right\Vert
_{L^{^{p^{+}q\left( .\right) }}\left( \Omega \right) }^{p^{+}}+\left\Vert
u\right\Vert _{L^{^{p^{-}q\left( .\right) }}\left( \Omega \right)
}^{p^{-}}\right) \\
&&+2C\left( n,s,p^{+},p^{-},d\right) \left\Vert 1\right\Vert
_{L^{q^{^{\prime }}\left( .\right) _{\left( \Omega \right) }}}\left(
\left\Vert u\right\Vert _{L^{^{p^{+}q\left( .\right) }}\left( \Omega \right)
}^{p^{+}}+\left\Vert u\right\Vert _{L^{^{p^{-}q\left( .\right) }}\left(
\Omega \right) }^{p^{-}}\right) \\
&\leq &2C\left( n,s,p^{+},\left\vert \Omega \right\vert \right) \left(
\left\Vert u\right\Vert _{L^{^{p^{+}q\left( .\right) }}\left( \Omega \right)
}^{p^{+}}+\left\Vert u\right\Vert _{L^{^{p^{-}q\left( .\right) }}\left(
\Omega \right) }^{p^{-}}\right) \\
&&+2C\left( n,s,p^{+},p^{-},d,\left\vert \Omega \right\vert \right) \left(
\left\Vert u\right\Vert _{L^{^{p^{+}q\left( .\right) }}\left( \Omega \right)
}^{p^{+}}+\left\Vert u\right\Vert _{L^{^{p^{-}q\left( .\right) }}\left(
\Omega \right) }^{p^{-}}\right) \\
&\leq &C\left( \left\Vert u\right\Vert _{L^{^{p^{+}q\left( .\right) }}\left(
\Omega \right) }^{p^{+}}+\left\Vert u\right\Vert _{L^{^{p^{-}q\left(
.\right) }}\left( \Omega \right) }^{p^{-}}\right) \\
&\leq &C\left( \left\Vert u\right\Vert _{\mathcal{W}^{s,p\left( .,.\right)
}\left( \Omega \right) }^{p^{+}}+\left\Vert u\right\Vert _{\mathcal{W}%
^{s,p\left( .,.\right) }\left( \Omega \right) }^{p^{-}}\right)
\end{eqnarray*}%
where $C:=\max \left\{ 2C\left( n,s,p^{+},\left\vert \Omega \right\vert
\right) ,2C\left( n,s,p^{+},p^{-},d,\left\vert \Omega \right\vert \right)
\right\} $. Note that this inequality follows in order from the fact that
the kernel $\frac{1}{\left\vert x-y\right\vert ^{n+sp\left( x,y\right) }}$
is summable with respect to $y$ if $|x-y|\geq 1$ since $n+sp\left(
x,y\right) >n$, on the other hand, the kernel $\frac{1}{\left\vert
x-y\right\vert ^{n+\left( s-1\right) p\left( x,y\right) }}$ is summable when 
$|x-y|<1$ since \ $n+\left( s-1\right) p\left( x,y\right) <n$, by using the H%
\"{o}lder's inequality, and finally by using theorem \ref{injec sobolev}. So
for all $u$ $\in $ $\mathcal{W}^{s,p\left( .,.\right) }\left( \Omega \right)
:$ 
\begin{equation*}
\int_{\Omega }\int_{\Omega }\frac{\left\vert u\left( x\right) -u\left(
y\right) \right\vert ^{p\left( x,y\right) }}{\left\vert x-y\right\vert
^{n+sp\left( x,y\right) }}dxdy\leq C\left( \left\Vert u\right\Vert _{%
\mathcal{W}^{s,p\left( .,.\right) }\left( \Omega \right)
}^{p^{+}}+\left\Vert u\right\Vert _{\mathcal{W}^{s,p\left( .,.\right)
}\left( \Omega \right) }^{p^{-}}\right) .
\end{equation*}\qed
\end{proof}

\section{Sobolev Extension Operators}
\label{sec:2}
To study the properties of the fractional sobolev spaces with variable
exponent $\mathcal{W}^{s,p\left( .,.\right) }\left( \Omega \right) $ it is
often preferable to beginning with the case $\Omega =%
\mathbb{R}
^{n}$. It is therefore useful to be able to extend a function $u\in \mathcal{%
W}^{s,p\left( .,.\right) }\left( \Omega \right) $ to a function $\widetilde{u%
}\in \mathcal{W}^{s,p\left( .,.\right) }\left( 
\mathbb{R}
^{n}\right) $.

We start with some preliminary lemmas, in which we will construct the
extension to the whole of $%
\mathbb{R}
^{n}$ of a function $u$ defined on $\Omega $.

\begin{lemma}
\label{u is identically zero in neig of bound}Let $\Omega $ be an open set
in $%
\mathbb{R}
^{n}$, $u$ a function in $\mathcal{W}^{s,p\left( .,.\right) }\left( \Omega
\right) $. If there exists a compact subset $K\subset \Omega $ such that $%
u\equiv 0$ in $\Omega \backslash K$, then the extension function $\widetilde{%
u}$ defined as $\ $%
\begin{equation*}
\widetilde{u}\left( x\right) =\left\{ 
\begin{array}{c}
u\left( x\right) \text{ \ if\ \ \ \ }x\in \Omega ; \\ 
0\text{ \ \ \ \ if\ \ \ \ }x\in 
\mathbb{R}
^{n}\backslash \Omega \text{\ .\ \ \ \ \ \ \ \ }%
\end{array}%
\right.
\end{equation*}%
belongs to $\mathcal{W}^{s,p\left( .,.\right) }\left( 
\mathbb{R}
^{n}\right) $ and 
\begin{equation*}
\left\Vert \widetilde{u}\right\Vert _{W^{s,p\left( .,.\right) }\left( 
\mathbb{R}
^{n}\right) }\leq \left\{ 
\begin{array}{c}
C\left\Vert u\right\Vert _{\mathcal{W}^{s,p\left( .,.\right) }\left( \Omega
\right) }^{\beta }\text{ if }\left\Vert u\right\Vert _{\mathcal{W}%
^{s,p\left( .,.\right) }\left( \Omega \right) }\geq 1\text{,} \\ 
C\left\Vert u\right\Vert _{\mathcal{W}^{s,p\left( .,.\right) }\left( \Omega
\right) }^{\alpha }\text{, if }\left\Vert u\right\Vert _{\mathcal{W}%
^{s,p\left( .,.\right) }\left( \Omega \right) }\leq 1\text{,}%
\end{array}%
\right.
\end{equation*}

where $C$ is a suitable positive constant.
\end{lemma}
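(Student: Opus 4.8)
The plan is to verify that $\widetilde u \in L^{q(.)}(\mathbb R^n)$ and that the Gagliardo semi-norm $[\widetilde u]_{\mathcal W^{s,p(.,.)}(\mathbb R^n)}$ is finite, controlling both by $\|u\|_{\mathcal W^{s,p(.,.)}(\Omega)}$. The $L^{q(.)}$ part is immediate: since $\widetilde u$ vanishes outside $\Omega$, $\rho_{q(.)}(\widetilde u) = \rho_{q(.)}(u)$ over $\Omega$, so $\|\widetilde u\|_{L^{q(.)}(\mathbb R^n)} = \|u\|_{L^{q(.)}(\Omega)} \le \|u\|_{\mathcal W^{s,p(.,.)}(\Omega)}$. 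The real work is the double integral over $\mathbb R^n \times \mathbb R^n$, which I would split as
\begin{equation*}
\iint_{\mathbb R^n\times\mathbb R^n} = \iint_{\Omega\times\Omega} + 2\iint_{\Omega\times(\mathbb R^n\setminus\Omega)} + \iint_{(\mathbb R^n\setminus\Omega)\times(\mathbb R^n\setminus\Omega)},
\end{equation*}
using symmetry of the integrand. The last term is zero since $\widetilde u \equiv 0$ there. The first term is exactly the semi-norm modular over $\Omega$, which by Lemma \ref{Estimat integral} is bounded by $C\big(\|u\|_{\mathcal W^{s,p(.,.)}(\Omega)}^{p^+} + \|u\|_{\mathcal W^{s,p(.,.)}(\Omega)}^{p^-}\big)$.

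So the crux is the mixed term $\iint_{\Omega\times(\mathbb R^n\setminus\Omega)} \frac{|u(x)|^{p(x,y)}}{|x-y|^{n+sp(x,y)}}\,dy\,dx$, where I have used that $\widetilde u(y)=0$ for $y\notin\Omega$. Here the hypothesis $u\equiv 0$ on $\Omega\setminus K$ is essential: the $x$-integral is really only over $K$, so $|x-y|$ stays bounded below by $\operatorname{dist}(K,\partial\Omega)>0$ on part of the region but can still be small near $\partial\Omega$ if... wait, no — since $x\in K$ and $y\notin\Omega$, we have $|x-y|\ge \operatorname{dist}(K,\mathbb R^n\setminus\Omega) =: \delta > 0$. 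Therefore $\frac{1}{|x-y|^{n+sp(x,y)}}$ is integrable in $y$ over $\{|x-y|\ge\delta\}$ because the exponent exceeds $n$; splitting $\{|x-y|\ge\delta\}$ into $\{\delta\le|x-y|<1\}$ (if $\delta<1$) and $\{|x-y|\ge 1\}$ and estimating $|x-y|^{-(n+sp(x,y))}$ by $\max(\delta^{-(n+sp^+)},\delta^{-(n+sp^-)})$ on the bounded-distance part and by $|x-y|^{-(n+sp^-)}$ on the far part, I get $\int_{\mathbb R^n\setminus\Omega}\frac{dy}{|x-y|^{n+sp(x,y)}} \le C(n,s,p^\pm,\delta)$ uniformly in $x\in K$. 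Then
\begin{equation*}
\iint_{K\times(\mathbb R^n\setminus\Omega)} \frac{|u(x)|^{p(x,y)}}{|x-y|^{n+sp(x,y)}}\,dy\,dx \le C\int_K \big(|u(x)|^{p^+}+|u(x)|^{p^-}\big)\,dx,
\end{equation*}
and as in Lemma \ref{Estimat integral} this is bounded via Hölder's inequality and the embedding $\mathcal W^{s,p(.,.)}(\Omega)\hookrightarrow L^{r(.)}(\Omega)$ (Theorem \ref{injec sobolev}) by $C\big(\|u\|_{\mathcal W^{s,p(.,.)}(\Omega)}^{p^+}+\|u\|_{\mathcal W^{s,p(.,.)}(\Omega)}^{p^-}\big)$.

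Combining the three pieces gives a modular bound $\int\!\!\int_{\mathbb R^n\times\mathbb R^n}\frac{|\widetilde u(x)-\widetilde u(y)|^{p(x,y)}}{|x-y|^{n+sp(x,y)}}\,dx\,dy \le C\big(\|u\|_{\mathcal W^{s,p(.,.)}(\Omega)}^{p^+}+\|u\|_{\mathcal W^{s,p(.,.)}(\Omega)}^{p^-}\big)$. To pass from this modular estimate to the stated norm inequality I would use the standard relation between modular and Luxemburg norm in variable-exponent spaces: if the modular of $v$ is $\le M$ then $[v]\le\max(M^{1/p^-},M^{1/p^+})$, and conversely, together with the elementary fact that for $t\ge 0$ one has $t^{p^+}+t^{p^-}\le 2t^\beta$ when $t\ge 1$ and $\le 2t^\alpha$ when $t\le 1$ (recalling $\alpha=\min\{1,p^-,p^+\}$, $\beta=\max\{1,p^+,p^-\}$). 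This dichotomy is exactly what produces the two-case conclusion in the statement, with the $L^{q(.)}$ contribution absorbed into the constant. The main obstacle is bookkeeping the variable exponent carefully in the mixed term — making sure the lower bound $\delta$ on $|x-y|$ is used correctly and that the passage modular $\to$ norm respects the $\|u\|\gtrless 1$ split — but no genuinely new idea beyond Lemma \ref{Estimat integral} is needed.
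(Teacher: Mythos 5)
Your proposal is essentially the paper's own proof: the same three-way splitting of the double integral over $\mathbb{R}^{n}\times\mathbb{R}^{n}$, the same use of the compact support of $u$ to bound $|x-y|$ from below in the mixed term (the paper works with $\mathrm{dist}(y,\partial K)$ and H\"older's inequality against $\chi_{K}$, you work with $\delta=\mathrm{dist}(K,\mathbb{R}^{n}\setminus\Omega)$ and a uniform bound on $\int_{\mathbb{R}^{n}\setminus\Omega}|x-y|^{-n-sp(x,y)}\,dy$ --- the same idea), Lemma~\ref{Estimat integral} and Theorem~\ref{injec sobolev} for the remaining estimates, and the same modular-to-norm conclusion. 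The only soft spot is the final conversion in the case $\left\Vert u\right\Vert_{\mathcal{W}^{s,p\left(.,.\right)}\left(\Omega\right)}\le 1$: taking the $1/p^{+}$-root of the modular bound gives exponent $\alpha/p^{+}$ rather than $\alpha$, so to recover the stated exponent one should appeal to the linearity of $u\mapsto\widetilde{u}$ together with the homogeneity of the Luxemburg norm (boundedness on the unit ball suffices); the paper's proof glosses over exactly this same point, so this is not a deviation from its argument.
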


\begin{proof}
\bigskip \bigskip By construction we have $\widetilde{u}$ $\in L^{q\left(
.\right) }\left( 
\mathbb{R}
^{n}\right) $ and 
\begin{equation*}
\left\Vert \widetilde{u}\right\Vert _{L^{q\left( .\right) }\left( 
\mathbb{R}
^{n}\right) }=\left\Vert u\right\Vert _{L^{q\left( .\right) }\left( \Omega
\right) }\leq \left\Vert u\right\Vert _{W^{s,p\left( .,.\right) }\left(
\Omega \right) }
\end{equation*}

Hence we show that

\begin{equation*}
\int_{%
\mathbb{R}
^{n}}\int_{%
\mathbb{R}
^{n}}\frac{\left\vert \widetilde{u}\left( x\right) -\widetilde{u}\left(
y\right) \right\vert ^{p\left( x,y\right) }}{\lambda ^{p\left( x,y\right)
}\left\vert x-y\right\vert ^{n+sp\left( x,y\right) }}dxdy<\infty ,\text{ for
some }\lambda >0\text{.}
\end{equation*}

We have

\begin{eqnarray*}
&&\int_{%
\mathbb{R}
^{n}}\int_{%
\mathbb{R}
^{n}}\frac{\left\vert \widetilde{u}\left( x\right) -\widetilde{u}\left(
y\right) \right\vert ^{p\left( x,y\right) }}{\lambda ^{p\left( x,y\right)
}\left\vert x-y\right\vert ^{n+sp\left( x,y\right) }}dxdy \\
&=&\int_{\Omega }\int_{\Omega }\frac{\left\vert u\left( x\right) -u\left(
y\right) \right\vert ^{p\left( x,y\right) }}{\lambda ^{p\left( x,y\right)
}\left\vert x-y\right\vert ^{n+sp\left( x,y\right) }}dxdy \\
&&+2\int_{\Omega }\left( \int_{%
\mathbb{R}
^{n}\backslash \Omega }\frac{\left\vert u\left( x\right) \right\vert
^{p\left( x,y\right) }}{\lambda ^{p\left( x,y\right) }\left\vert
x-y\right\vert ^{n+sp\left( x,y\right) }}dy\right) dx,
\end{eqnarray*}

since $u\in $ $W^{s,p\left( .,.\right) }\left( \Omega \right) $, then 
\begin{equation*}
\int_{\Omega }\int_{\Omega }\frac{\left\vert u\left( x\right) -u\left(
y\right) \right\vert ^{p\left( x,y\right) }}{\lambda ^{p\left( x,y\right)
}\left\vert x-y\right\vert ^{n+sp\left( x,y\right) }}dxdy<\infty \text{.}
\end{equation*}
For all $y\in 
\mathbb{R}
^{n}\backslash K$, 
\begin{eqnarray*}
&&\frac{\left\vert u\left( x\right) \right\vert ^{p\left( x,y\right) }}{%
\lambda ^{p\left( x,y\right) }\left\vert x-y\right\vert ^{n+sp\left(
x,y\right) }} \\
&=&\frac{\chi _{K\left( x\right) }\left\vert u\left( x\right) \right\vert
^{p\left( x,y\right) }}{\lambda ^{p\left( x,y\right) }\left\vert
x-y\right\vert ^{n+sp\left( x,y\right) }} \\
&\leq &\frac{1}{\min \left\{ \lambda ^{p^{-}},\lambda ^{p^{+}}\right\} }%
\sup_{x\in K}\frac{1}{\left\vert x-y\right\vert ^{n+sp\left( x,y\right) }}%
\chi _{K\left( x\right) }\left\vert u\left( x\right) \right\vert ^{p\left(
x,y\right) } \\
&\leq &\frac{1}{\min \left\{ \lambda ^{p^{-}},\lambda ^{p^{+}}\right\} }%
\frac{1}{dist\left( y,\partial K\right) ^{n+sp\left( x,y\right) }}\chi
_{K\left( x\right) }\left\vert u\left( x\right) \right\vert ^{p\left(
x,y\right) } \\
&\leq &\frac{1}{\min \left\{ \lambda ^{p^{-}},\lambda ^{p^{+}}\right\} }%
\frac{\chi _{K\left( x\right) }}{dist\left( y,\partial K\right) ^{n+sp\left(
x,y\right) }}\left( \left\vert u\left( x\right) \right\vert
^{p^{+}}+\left\vert u\left( x\right) \right\vert ^{p^{-}}\right)
\end{eqnarray*}%
so 
\begin{eqnarray*}
&&\int_{\Omega }\left( \int_{%
\mathbb{R}
^{n}\backslash \Omega }\frac{\left\vert u\left( x\right) \right\vert
^{p\left( x,y\right) }}{\lambda ^{p\left( x,y\right) }\left\vert
x-y\right\vert ^{n+sp\left( x,y\right) }}dy\right) dx \\
&\leq &\frac{1}{\min \left\{ \lambda ^{p^{-}},\lambda ^{p^{+}}\right\} }%
\int_{\Omega }\int_{%
\mathbb{R}
^{n}\backslash \Omega }\frac{\chi _{K\left( x\right) }}{dist\left(
y,\partial K\right) ^{n+sp\left( x,y\right) }}\left( \left\vert u\left(
x\right) \right\vert ^{p^{+}}+\left\vert u\left( x\right) \right\vert
^{p^{-}}\right) dxdy,
\end{eqnarray*}%
using the H\"{o}lder inequality, we get 
\begin{eqnarray*}
&&\int_{\Omega }\left( \int_{%
\mathbb{R}
^{n}\backslash \Omega }\frac{\left\vert u\left( x\right) \right\vert
^{p\left( x,y\right) }}{\lambda ^{p\left( x,y\right) }\left\vert
x-y\right\vert ^{n+sp\left( x,y\right) }}dy\right) dx \\
&\leq &\frac{2\left\Vert \chi _{K}\right\Vert _{q^{^{\prime }}\left(
.\right) }}{\min \left\{ \lambda ^{p^{-}},\lambda ^{p^{+}}\right\} }\left(
\int_{%
\mathbb{R}
^{n}\backslash \Omega }\frac{1}{dist\left( y,\partial K\right) ^{n+sp\left(
x,y\right) }}dy\right) \\
&&\times \left( \left\Vert u\right\Vert _{L^{^{p^{+}q\left( .\right)
}}\left( \Omega \right) }^{p^{+}}+\left\Vert u\right\Vert
_{L^{^{p^{-}q\left( .\right) }}\left( \Omega \right) }^{p^{-}}\right) \\
&\leq &\frac{2\max \left\{ 1,\left\vert K\right\vert \right\} }{\min \left\{
\lambda ^{p^{-}},\lambda ^{p^{+}}\right\} }\left( \int_{%
\mathbb{R}
^{n}\backslash \Omega }\frac{1}{dist\left( y,\partial K\right) ^{n+sp\left(
x,y\right) }}dy\right) \\
&&\times \left( \left\Vert u\right\Vert _{L^{^{p^{+}q\left( .\right)
}}\left( \Omega \right) }^{p^{+}}+\left\Vert u\right\Vert
_{L^{^{p^{-}q\left( .\right) }}\left( \Omega \right) }^{p^{-}}\right) ,
\end{eqnarray*}%
by theorem \ref{injec sobolev} we know that there existe a positive constant 
$C$ such that

\begin{equation*}
\left\Vert u\right\Vert _{L^{^{p^{+}q\left( .\right) }}\left( \Omega \right)
}^{p^{+}}\leq C\left\Vert u\right\Vert _{\mathcal{W}^{s,p\left( .,.\right)
}\left( \Omega \right) }^{p^{+}}\text{ \ and \ }\left\Vert u\right\Vert
_{L^{^{p^{-}q\left( .\right) }}\left( \Omega \right) }^{p^{-}}\leq
C\left\Vert u\right\Vert _{\mathcal{W}^{s,p\left( .,.\right) }\left( \Omega
\right) }^{p^{-}}\text{.}
\end{equation*}%
On the other hand 
\begin{equation*}
\left( \int_{%
\mathbb{R}
^{n}\backslash \Omega }\frac{1}{dist\left( y,\partial K\right) ^{n+sp\left(
x,y\right) }}dy\right) <\infty \text{,}
\end{equation*}
since $\ n+sp\left( x,y\right) >n$ and dist$\left( y,\partial K\right) >0$.
Using the lemma \ref{Estimat integral} we get 
\begin{eqnarray*}
&&\int_{%
\mathbb{R}
^{n}}\int_{%
\mathbb{R}
^{n}}\frac{\left\vert \widetilde{u}\left( x\right) -\widetilde{u}\left(
y\right) \right\vert ^{p\left( x,y\right) }}{\lambda ^{p\left( x,y\right)
}\left\vert x-y\right\vert ^{n+sp\left( x,y\right) }}dxdy \\
&\leq &\frac{C}{\min \left\{ \lambda ^{p^{-}},\lambda ^{p^{+}}\right\} }%
\left( \left\Vert u\right\Vert _{\mathcal{W}^{s,p\left( .,.\right) }\left(
\Omega \right) }^{p^{+}}+\left\Vert u\right\Vert _{\mathcal{W}^{s,p\left(
.,.\right) }\left( \Omega \right) }^{p^{-}}\right) \\
&&+\frac{2C\max \left\{ 1,\left\vert K\right\vert \right\} }{\min \left\{
\lambda ^{p^{-}},\lambda ^{p^{+}}\right\} }\left( \left\Vert u\right\Vert _{%
\mathcal{W}^{s,p\left( .,.\right) }\left( \Omega \right)
}^{p^{+}}+\left\Vert u\right\Vert _{\mathcal{W}^{s,p\left( .,.\right)
}\left( \Omega \right) }^{p^{-}}\right) \\
&\leq &\frac{\max \left\{ C,2C\max \left\{ 1,\left\vert K\right\vert
\right\} \right\} }{\min \left\{ \lambda ^{p^{-}},\lambda ^{p^{+}}\right\} }%
\left( \left\Vert u\right\Vert _{\mathcal{W}^{s,p\left( .,.\right) }\left(
\Omega \right) }^{p^{+}}+\left\Vert u\right\Vert _{\mathcal{W}^{s,p\left(
.,.\right) }\left( \Omega \right) }^{p^{-}}\right) \\
&\leq &C\left( \left\Vert u\right\Vert _{\mathcal{W}^{s,p\left( .,.\right)
}\left( \Omega \right) }^{p^{+}}+\left\Vert u\right\Vert _{\mathcal{W}%
^{s,p\left( .,.\right) }\left( \Omega \right) }^{p^{-}}\right) ,
\end{eqnarray*}%
which implies that 
\begin{equation*}
\left[ \widetilde{u}\right] _{^{_{\mathcal{W}^{s,p\left( .,.\right) }\left( 
\mathbb{R}
^{n}\right) }}}\leq C\left( \left\Vert u\right\Vert _{\mathcal{W}^{s,p\left(
.,.\right) }\left( \Omega \right) }^{p^{+}}+\left\Vert u\right\Vert _{%
\mathcal{W}^{s,p\left( .,.\right) }\left( \Omega \right) }^{p^{-}}\right) 
\text{,}
\end{equation*}%
so 
\begin{eqnarray*}
&&\left\Vert \widetilde{u}\right\Vert _{L^{q\left( .\right) }\left( 
\mathbb{R}
^{n}\right) }+\left[ \widetilde{u}\right] _{^{_{\mathcal{W}^{s,p\left(
.,.\right) }\left( \Omega \right) \left( 
\mathbb{R}
^{n}\right) }}} \\
&\leq &\max \left\{ 1,C\right\} \left( \left\Vert u\right\Vert _{\mathcal{W}%
^{s,p\left( .,.\right) }\left( \Omega \right) }+\left\Vert u\right\Vert _{%
\mathcal{W}^{s,p\left( .,.\right) }\left( \Omega \right)
}^{p^{+}}+\left\Vert u\right\Vert _{\mathcal{W}^{s,p\left( .,.\right)
}\left( \Omega \right) }^{p^{-}}\right) \text{,}
\end{eqnarray*}

Consequently 
\begin{equation*}
\left\Vert \widetilde{u}\right\Vert _{\mathcal{W}^{s,p\left( .,.\right)
}\left( 
\mathbb{R}
^{n}\right) }\leq \left\{ 
\begin{array}{c}
C\left\Vert u\right\Vert _{\mathcal{W}^{s,p\left( .,.\right) }\left( \Omega
\right) }^{\beta }\text{ \ if }\left\Vert u\right\Vert _{\mathcal{W}%
^{s,p\left( .,.\right) }\left( \Omega \right) }\geq 1\text{,} \\ 
C\left\Vert u\right\Vert _{\mathcal{W}^{s,p\left( .,.\right) }\left( \Omega
\right) }^{\alpha }\text{ \ if }\left\Vert u\right\Vert _{\mathcal{W}%
^{s,p\left( .,.\right) }\left( \Omega \right) }\leq 1\text{,}%
\end{array}%
\right.
\end{equation*}%
where $C$ is a suitable positive constant.
\end{proof}

\begin{lemma}
\label{extension par reflexion}Let $\Omega $ be an open set in $%
\mathbb{R}
^{n}$, symmetric with respect to the coordinate $x_{n}$, and consider the
sets \ $\Omega _{+}=\{x\in \Omega :x_{n}>0\}$ $\ $and $\Omega _{-}=\{x\in
\Omega :x_{n}\leq 0\}$. Let $u\in \mathcal{W}^{s,p\left( .,.\right) }\left(
\Omega _{+}\right) $. We define the function $\widetilde{u}$ extended by
reflection%
\begin{equation*}
\widetilde{u}=\left\{ 
\begin{array}{c}
u\left( x^{^{\prime }},x_{n}\right) \text{ if }x_{n}\geq 0\text{,} \\ 
u\left( x^{^{\prime }},-x_{n}\right) \text{ if }x_{n}<0\text{.}%
\end{array}%
\right.
\end{equation*}%
Then $\widetilde{u}\in \mathcal{W}^{s,p\left( .,.\right) }\left( \Omega
\right) $.
\end{lemma}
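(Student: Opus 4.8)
The plan is to check the two defining requirements for $\widetilde u$ to lie in $\mathcal{W}^{s,p(.,.)}(\Omega)$: that $\widetilde u\in L^{q(.)}(\Omega)$, and that
\[
\int_\Omega\int_\Omega\frac{|\widetilde u(x)-\widetilde u(y)|^{p(x,y)}}{\lambda^{p(x,y)}\,|x-y|^{n+sp(x,y)}}\,dx\,dy<\infty\quad\text{for some }\lambda>0 .
\]
Write $R(x)=(x',-x_n)$, so that $\widetilde u=u$ on $\Omega_+$ and $\widetilde u=u\circ R$ on $\Omega_-$, and $R$ is a measure--preserving involution carrying $\Omega_-$ onto $\Omega_+$ up to a null set. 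The $L^{q(.)}$ membership is then routine: after the substitution $\xi=R(x)$ on $\Omega_-$ one gets $\int_\Omega|\lambda\widetilde u(x)|^{q(x)}\,dx=\int_{\Omega_+}|\lambda u(x)|^{q(x)}\,dx+\int_{\Omega_+}|\lambda u(\xi)|^{q(R(\xi))}\,d\xi$, both terms being finite (the second because $q$ is bounded and $\mathcal{W}^{s,p(.,.)}(\Omega_+)\hookrightarrow L^{q^{\pm}}(\Omega_+)$ by Theorem \ref{injec sobolev}, or simply because $q(R(\xi))=q(\xi)$ if $q$ is taken even in $x_n$).

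For the double integral I would split $\Omega\times\Omega$ into the four blocks $\Omega_\pm\times\Omega_\pm$, the two mixed blocks being equal by the $x\leftrightarrow y$ symmetry of the integrand. On $\Omega_+\times\Omega_+$ we have $\widetilde u=u$, so the contribution is exactly the modular of $u$ on $\Omega_+$, finite once $\lambda>[u]_{\mathcal{W}^{s,p(.,.)}(\Omega_+)}$. On $\Omega_-\times\Omega_-$ the substitution $x\mapsto R(x)$, $y\mapsto R(y)$ (an isometry with unit Jacobian, $|R(x)-R(y)|=|x-y|$) turns the integral into one over $\Omega_+\times\Omega_+$ of the same shape. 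For a mixed block, with $x\in\Omega_+$ and $y\in\Omega_-$ one has $\widetilde u(y)=u(R(y))$ and the elementary inequality
\[
|x-y|^2=|x'-y'|^2+(x_n-y_n)^2\ \ge\ |x'-y'|^2+(x_n+y_n)^2=|x-R(y)|^2 ,
\]
valid because $x_n>0\ge y_n$; since $n+sp(x,y)>0$ this gives $|x-y|^{-(n+sp(x,y))}\le|x-R(y)|^{-(n+sp(x,y))}$, so after the substitution $y\mapsto R(y)$ the mixed block is dominated by an integral over $\Omega_+\times\Omega_+$ of the same shape.

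The one point that is genuinely new compared with the constant--exponent case of \cite{ED} is that none of these substitutions preserves the exponent: the three reduced integrals carry $p(R(x),R(y))$, resp.\ $p(x,R(y))$, not $p(x,y)$. If one grants that $p$ is even in $x_n$ and in $y_n$ near $\partial\Omega$ --- the harmless normalization one always makes when this lemma is used to flatten and reflect a boundary chart --- then $p(R(x),R(y))=p(x,R(y))=p(x,y)$ and every block reduces literally to the modular of $u$ on $\Omega_+$, which finishes the proof (and also yields $\|\widetilde u\|_{\mathcal{W}^{s,p(.,.)}(\Omega)}\le C\|u\|_{\mathcal{W}^{s,p(.,.)}(\Omega_+)}^{\beta}$ or $C\|u\|_{\mathcal{W}^{s,p(.,.)}(\Omega_+)}^{\alpha}$ according as the right side is $\ge1$ or $\le1$, as in Lemma \ref{u is identically zero in neig of bound}). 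Without that normalization one still wins, but must absorb the reflected exponent $\pi(x,y)$ by hand exactly as in the proof of Lemma \ref{Estimat integral}: bound $|u(x)-u(y)|^{\pi(x,y)}\le 2^{p^+-1}(|u(x)|^{\pi(x,y)}+|u(y)|^{\pi(x,y)})$, split each resulting integral over $\{|x-y|\ge1\}$ and $\{|x-y|<1\}$ --- where the kernels $|x-y|^{-(n+sp^-)}$ and $|x-y|^{-(n+(s-1)p(x,y))}$ are respectively integrable at infinity and near the diagonal (the latter since $(s-1)p<0$) --- and close with Hölder's inequality and Theorem \ref{injec sobolev} ($u\in L^{p^\pm q(.)}(\Omega_+)$, using $p^\pm q(x)<p^\ast(x)$). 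The main obstacle is precisely this exponent--mismatch under reflection; everything else is the four--region bookkeeping together with the distance inequality $|x-y|\ge|x-R(y)|$.
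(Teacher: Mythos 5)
Your proof is correct and follows essentially the same route as the paper: split the Gagliardo modular of $\widetilde{u}$ into the blocks $\Omega_{\pm}\times\Omega_{\pm}$, reduce the reflected and mixed blocks to $\Omega_{+}\times\Omega_{+}$ via the change of variables $R(x)=(x',-x_n)$ together with the inequality $|x-y|\geq |x-R(y)|$, and treat the $L^{q(.)}$ part by the same substitution. You actually go further than the paper, whose proof merely writes this decomposition and asserts each piece is finite: your point that the reflection does not preserve the variable exponent (so one must either normalize $p$ and $q$ to be even in $x_n$, $y_n$, or give up the difference structure and argue as in Lemma \ref{Estimat integral} with Theorem \ref{injec sobolev}) is precisely the issue the paper's argument leaves unaddressed.
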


\begin{proof}
By splitting the integrals and changing variable $\widehat{x}=\left(
x^{^{\prime }},-x_{n}\right) $, we get%
\begin{eqnarray*}
\int_{\Omega }\left\vert \widetilde{u}\right\vert ^{q\left( x\right) }dx
&=&\int_{\Omega _{+}}\left\vert \widetilde{u}\right\vert ^{q\left( x\right)
}dx+\int_{\Omega _{-}}\left\vert \widetilde{u}\right\vert ^{q\left( x\right)
}dx \\
&=&\int_{\Omega _{+}}\left\vert u\right\vert ^{q\left( x\right)
}dx+\int_{\Omega _{+}}\left\vert u\left( \widehat{x}^{^{\prime }},\widehat{x}%
_{n}\right) \right\vert ^{q\left( x\right) }d\widehat{x} \\
&=&2\int_{\Omega _{+}}\left\vert u\right\vert ^{q\left( x\right) }dx.
\end{eqnarray*}%
since $u\in W^{s,p\left( .,.\right) }\left( \Omega _{+}\right) $ then $%
\int_{\Omega _{+}}\left\vert \widetilde{u}\right\vert ^{q\left( x\right)
}dx<\infty $, and therefore $\widetilde{u}\in L^{q\left( .\right) }\left(
\Omega \right) $.

Also, we have 
\begin{eqnarray*}
&&\int_{\Omega }\int_{\Omega }\frac{\left\vert \widetilde{u}\left( x\right) -%
\widetilde{u}\left( y\right) \right\vert ^{p\left( x,y\right) }}{\lambda
^{p\left( x,y\right) }\left\vert x-y\right\vert ^{n+sp\left( x,y\right) }}%
dxdy \\
&=&\int_{\Omega _{+}}\int_{\Omega _{+}}\frac{\left\vert u\left( x\right)
-u\left( y\right) \right\vert ^{p\left( x,y\right) }}{\lambda ^{p\left(
x,y\right) }\left\vert x-y\right\vert ^{n+sp\left( x,y\right) }}dxdy \\
&&+2\int_{\Omega _{+}}\int_{%
\mathbb{R}
^{n}\backslash \Omega _{+}}\frac{\left\vert u\left( x\right) -u\left(
y^{^{\prime }},-y_{n}\right) \right\vert ^{p\left( x,y\right) }}{\lambda
^{p\left( x,y\right) }\left\vert x-y\right\vert ^{n+sp\left( x,y\right) }}%
dxdy \\
&&+\int_{%
\mathbb{R}
^{n}\backslash \Omega _{+}}\int_{%
\mathbb{R}
^{n}\backslash \Omega _{+}}\frac{\left\vert u\left( x^{^{\prime
}},-x_{n}\right) -u\left( y^{^{\prime }},-y_{n}\right) \right\vert ^{p\left(
x,y\right) }}{\lambda ^{p\left( x,y\right) }\left\vert x-y\right\vert
^{n+sp\left( x,y\right) }}dxdy \\
&<&\infty \text{.}
\end{eqnarray*}

This concludes the proof.\qed
\end{proof}

\begin{remark}
Note that this lemma \bigskip gives a very simple construction of extension
operators for certain open sets that are not necessarily smooth.
\end{remark}

Now, a truncation lemma near of the boundary $\partial \Omega $.

\begin{lemma}
\label{truncation lemma near the boundary}Let \ $\Omega $ be an open set in $%
\mathbb{R}
^{n}.$ Let us consider $u$ $\in $ $\mathcal{W}^{s,p\left( .,.\right) }\left(
\Omega \right) $ and $\psi \in \mathcal{C}_{0}^{\infty }\left( \Omega
\right) $, $0\leq \psi \leq 1$. Then $\psi u$ $\in $ $\mathcal{W}^{s,p\left(
.,.\right) }\left( \Omega \right) $ and 
\begin{equation*}
\left\Vert \psi u\right\Vert _{\mathcal{W}^{s,p\left( .,.\right) }\left(
\Omega \right) }\leq \left\{ 
\begin{array}{c}
C\left\Vert u\right\Vert _{\mathcal{W}^{s,p\left( .,.\right) }\left( \Omega
\right) }^{\beta }\text{ if }\left\Vert u\right\Vert _{\mathcal{W}%
^{s,p\left( .,.\right) }\left( \Omega \right) }\geq 1\text{,} \\ 
C\left\Vert u\right\Vert _{\mathcal{W}^{s,p\left( .,.\right) }\left( \Omega
\right) }^{\alpha }\text{ if }\left\Vert u\right\Vert _{\mathcal{W}%
^{s,p\left( .,.\right) }\left( \Omega \right) }\leq 1\text{,}%
\end{array}%
\right.
\end{equation*}

where $C$ is a suitable positive constant.
\end{lemma}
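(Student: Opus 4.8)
The plan is to mimic closely the arguments already used for Lemma~\ref{Estimat integral} and Lemma~\ref{u is identically zero in neig of bound}. The $L^{q(.)}$-part is immediate: since $0\le\psi\le1$ we have $|\psi u|\le|u|$ pointwise, hence $\|\psi u\|_{L^{q(.)}(\Omega)}\le\|u\|_{L^{q(.)}(\Omega)}\le\|u\|_{\mathcal{W}^{s,p(.,.)}(\Omega)}$, so everything reduces to controlling the Gagliardo modular of $\psi u$. Writing
\[
\psi(x)u(x)-\psi(y)u(y)=\psi(x)(u(x)-u(y))+u(y)(\psi(x)-\psi(y))
\]
and using $|a+b|^{p(x,y)}\le 2^{p^{+}-1}(|a|^{p(x,y)}+|b|^{p(x,y)})$ together with $0\le\psi\le1$, one obtains for every $\lambda>0$
\[
\int_{\Omega}\int_{\Omega}\frac{|\psi u(x)-\psi u(y)|^{p(x,y)}}{\lambda^{p(x,y)}|x-y|^{n+sp(x,y)}}\,dxdy\le\frac{2^{p^{+}-1}}{\min\{\lambda^{p^{-}},\lambda^{p^{+}}\}}\,(A+B),
\]
where $A:=\int_{\Omega}\int_{\Omega}\frac{|u(x)-u(y)|^{p(x,y)}}{|x-y|^{n+sp(x,y)}}\,dxdy$ and $B:=\int_{\Omega}\int_{\Omega}\frac{|u(y)|^{p(x,y)}|\psi(x)-\psi(y)|^{p(x,y)}}{|x-y|^{n+sp(x,y)}}\,dxdy$. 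By Lemma~\ref{Estimat integral}, $A\le C(\|u\|_{\mathcal{W}^{s,p(.,.)}(\Omega)}^{p^{+}}+\|u\|_{\mathcal{W}^{s,p(.,.)}(\Omega)}^{p^{-}})$.

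The real work is the estimate of $B$, and this is where one must use the smoothness of $\psi$ and not merely $0\le\psi\le1$. Two bounds are available: $|\psi(x)-\psi(y)|\le1$ always, and the Lipschitz bound $|\psi(x)-\psi(y)|\le L|x-y|$ with $L:=\|\nabla\psi\|_{\infty}$. I split $B=B_{1}+B_{2}$ according to whether $|x-y|<1$ or $|x-y|\ge1$. On $\{|x-y|<1\}$ the Lipschitz bound yields
\[
\frac{|\psi(x)-\psi(y)|^{p(x,y)}}{|x-y|^{n+sp(x,y)}}\le\frac{\max\{L^{p^{+}},L^{p^{-}}\}}{|x-y|^{n+(s-1)p(x,y)}},
\]
and the kernel $|x-y|^{-(n+(s-1)p(x,y))}$ is summable near the diagonal since $n+(s-1)p(x,y)<n$, exactly the gain already exploited in Lemma~\ref{Estimat integral}; on $\{|x-y|\ge1\}$ the bound $|\psi(x)-\psi(y)|^{p(x,y)}\le1$ together with $n+sp(x,y)>n$ makes the kernel summable at infinity. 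In each piece I bound $|u(y)|^{p(x,y)}\le|u(y)|^{p^{+}}+|u(y)|^{p^{-}}$ (removing the $x$-dependence from this factor), perform the inner integration to produce a finite constant $C(n,s,p^{-},p^{+},L)$, and am left with $\int_{\Omega}(|u(y)|^{p^{+}}+|u(y)|^{p^{-}})\,dy$. Hölder's inequality with exponents $q(.)$ and $q'(.)$ (note $\|1\|_{L^{q'(.)}(\Omega)}<\infty$ since $\Omega$ is bounded) followed by the embedding of Theorem~\ref{injec sobolev} (applicable since $p^{\ast}(x)>p^{+}q(x)$) then gives $B\le C(\|u\|_{\mathcal{W}^{s,p(.,.)}(\Omega)}^{p^{+}}+\|u\|_{\mathcal{W}^{s,p(.,.)}(\Omega)}^{p^{-}})$.

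Combining the bounds for $A$ and $B$ gives, for every $\lambda>0$,
\[
\int_{\Omega}\int_{\Omega}\frac{|\psi u(x)-\psi u(y)|^{p(x,y)}}{\lambda^{p(x,y)}|x-y|^{n+sp(x,y)}}\,dxdy\le\frac{C(\|u\|_{\mathcal{W}^{s,p(.,.)}(\Omega)}^{p^{+}}+\|u\|_{\mathcal{W}^{s,p(.,.)}(\Omega)}^{p^{-}})}{\min\{\lambda^{p^{-}},\lambda^{p^{+}}\}},
\]
so, exactly as in the proof of Lemma~\ref{u is identically zero in neig of bound}, $\psi u\in\mathcal{W}^{s,p(.,.)}(\Omega)$ with $[\psi u]_{\mathcal{W}^{s,p(.,.)}(\Omega)}\le C(\|u\|_{\mathcal{W}^{s,p(.,.)}(\Omega)}^{p^{+}}+\|u\|_{\mathcal{W}^{s,p(.,.)}(\Omega)}^{p^{-}})$. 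Adding the $L^{q(.)}$-estimate yields $\|\psi u\|_{\mathcal{W}^{s,p(.,.)}(\Omega)}\le\max\{1,C\}(\|u\|_{\mathcal{W}^{s,p(.,.)}(\Omega)}+\|u\|_{\mathcal{W}^{s,p(.,.)}(\Omega)}^{p^{+}}+\|u\|_{\mathcal{W}^{s,p(.,.)}(\Omega)}^{p^{-}})$, and separating the cases $\|u\|_{\mathcal{W}^{s,p(.,.)}(\Omega)}\ge1$ and $\le1$ (recalling $1\le p^{-}\le p^{+}$, so $\alpha=1$ and $\beta=p^{+}$) produces the asserted inequality. The only genuine obstacle is the estimate of $B$ near the diagonal, where the cancellation coming from the Lipschitz continuity of $\psi$ is essential; the rest is the variable-exponent bookkeeping already carried out in Lemma~\ref{Estimat integral}.
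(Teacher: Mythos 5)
Your proof is correct and follows the same overall scheme as the paper's: bound the $L^{q(.)}$-part using $0\le\psi\le1$, split $\psi(x)u(x)-\psi(y)u(y)$ into $\psi(x)(u(x)-u(y))$ and $u(y)(\psi(x)-\psi(y))$, control the first piece by Lemma~\ref{Estimat integral}, and control the second via the Lipschitz constant of $\psi$, H\"older's inequality and the embedding of Theorem~\ref{injec sobolev}, before passing from the modular bound to the seminorm and separating the cases $\|u\|\ge1$, $\|u\|\le1$ (with $\alpha=1$, $\beta=p^{+}$). The one genuine divergence is in the term $B$: the paper bounds the kernel pointwise, writing $|x-y|^{(1-s)p(x,y)-n}\le\max\{d^{(1-s)p^{+}-n},d^{(1-s)p^{-}-n}\}$ with $d=\operatorname{diam}\Omega$, and then integrates $|u(y)|^{p(x,y)}$ over $\Omega\times\Omega$; you instead split into $\{|x-y|<1\}$ and $\{|x-y|\ge1\}$ and use only the integrability of $|x-y|^{-(n+(s-1)p(x,y))}$ near the diagonal (resp.\ of $|x-y|^{-(n+sp(x,y))}$ far away), exactly as in the proof of Lemma~\ref{Estimat integral}. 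Your variant is in fact the safer one: when $(1-s)p(x,y)<n$ the exponent $(1-s)p(x,y)-n$ is negative, so the paper's uniform bound by powers of the diameter is not valid pointwise (the kernel blows up at the diagonal and is only integrable there), whereas your near/far splitting needs nothing beyond integrability and yields the same constant structure $C(\|u\|^{p^{+}}+\|u\|^{p^{-}})$. Apart from this (and your harmless inclusion of the factor $2^{p^{+}-1}$, which the paper drops), the two arguments coincide.
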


\begin{proof}
\bigskip Since $\left\vert \psi \right\vert \leq 1$, it follows that 
\begin{equation*}
\int_{\Omega }\left\vert \frac{\psi u}{\lambda }\right\vert ^{q\left(
x\right) }dx\leq \int_{\Omega }\left\vert \frac{u}{\lambda }\right\vert
^{q\left( x\right) }dx
\end{equation*}%
and consequently:%
\begin{equation*}
\inf \left\{ \lambda >0,\int_{\Omega }\left\vert \frac{\psi u}{\lambda }%
\right\vert ^{q\left( x\right) }dx\leq 1\right\} \leq \inf \left\{ \lambda
>0,\int_{\Omega }\left\vert \frac{u}{\lambda }\right\vert ^{q\left( x\right)
}dx\leq 1\right\}
\end{equation*}%
so 
\begin{equation}
\left\Vert \psi u\right\Vert _{_{L^{^{q\left( .\right) }}\left( \Omega
\right) }}\leq \left\Vert u\right\Vert _{_{L^{^{q\left( .\right) }}\left(
\Omega \right) }}\leq \left\Vert u\right\Vert _{\mathcal{W}^{s,p\left(
.,.\right) }\left( \Omega \right) }\text{.}  \label{2.1}
\end{equation}%
On the other hand 
\begin{eqnarray*}
&&\int_{\Omega }\int_{\Omega }\frac{\left\vert \psi \left( x\right) u\left(
x\right) -\psi \left( y\right) u\left( y\right) \right\vert ^{p\left(
x,y\right) }}{\lambda ^{p\left( x,y\right) }\left\vert x-y\right\vert
^{n+sp\left( x,y\right) }}dxdy \\
&\leq &\int_{\Omega }\int_{\Omega }\frac{\left\vert \psi \left( x\right)
u\left( x\right) -\psi \left( x\right) u\left( y\right) \right\vert
^{p\left( x,y\right) }}{\lambda ^{p\left( x,y\right) }\left\vert
x-y\right\vert ^{n+sp\left( x,y\right) }}dxdy \\
&&+\int_{\Omega }\int_{\Omega }\frac{\left\vert \psi \left( x\right) u\left(
y\right) -\psi \left( y\right) u\left( y\right) \right\vert ^{p\left(
x,y\right) }}{\lambda ^{p\left( x,y\right) }\left\vert x-y\right\vert
^{n+sp\left( x,y\right) }}dxdy \\
&\leq &\frac{1}{\min \left\{ \lambda ^{p^{-}},\lambda ^{p^{+}}\right\} }%
\int_{\Omega }\int_{\Omega }\frac{\left\vert u\left( x\right) -u\left(
y\right) \right\vert ^{p\left( x,y\right) }}{\left\vert x-y\right\vert
^{n+sp\left( x,y\right) }}dxdy \\
&&+\frac{1}{\min \left\{ \lambda ^{p^{-}},\lambda ^{p^{+}}\right\} }%
\int_{\Omega }\int_{\Omega }\frac{\left\vert u\left( y\right) \right\vert
^{p\left( x,y\right) }\left\vert \psi \left( x\right) -\psi \left( y\right)
\right\vert ^{p\left( x,y\right) }}{\left\vert x-y\right\vert ^{n+sp\left(
x,y\right) }}dxdy.
\end{eqnarray*}%
Since $\psi \in \mathcal{C}_{0}^{\infty }\left( \Omega \right) $, we have 
\begin{eqnarray*}
\frac{\left\vert \psi \left( x\right) -\psi \left( y\right) \right\vert
^{p\left( x,y\right) }}{\left\vert x-y\right\vert ^{n+sp\left( x,y\right) }}
&\leq &k^{\max \left( p^{+},p^{-}\right) }\left\vert x-y\right\vert ^{\left(
1-s\right) p\left( x,y\right) -n} \\
&\leq &k^{\max \left( p^{+},p^{-}\right) }\max \left\{ d^{\left( 1-s\right)
p^{+}-n},d^{\left( 1-s\right) p^{-}-n}\right\} ,
\end{eqnarray*}%
where $k$\ denotes the Lipschitz constant of $\psi $. So

\begin{eqnarray*}
&&\int_{\Omega }\int_{\Omega }\frac{\left\vert u\left( y\right) \right\vert
^{p\left( x,y\right) }\left\vert \psi \left( x\right) -\psi \left( y\right)
\right\vert ^{p\left( x,y\right) }}{\left\vert x-y\right\vert ^{n+sp\left(
x,y\right) }}dxdy \\
&\leq &k^{\max \left( p^{+},p^{-}\right) }\max \left\{ d^{\left( 1-s\right)
p^{+}-n},d^{\left( 1-s\right) p^{-}-n}\right\} \int_{\Omega }\int_{\Omega
}\left\vert u\left( y\right) \right\vert ^{p\left( x,y\right) }dxdy \\
&\leq &k^{\max \left( p^{+},p^{-}\right) }\max \left\{ d^{\left( 1-s\right)
p^{+}-n},d^{\left( 1-s\right) p^{-}-n}\right\} \\
&&\times \int_{\Omega }\int_{\Omega }\left( \left\vert u\left( y\right)
\right\vert ^{p^{+}}+\left\vert u\left( y\right) \right\vert ^{p^{-}}\right)
dxdy \\
&\leq &k^{\max \left( p^{+},p^{-}\right) }\max \left\{ d^{\left( 1-s\right)
p^{+}-n},d^{\left( 1-s\right) p^{-}-n}\right\} 2\left\vert \Omega
\right\vert \left\Vert 1\right\Vert _{L^{q^{^{\prime }}\left( .\right)
}\left( \Omega \right) } \\
&&\times \left( \left\Vert u\right\Vert _{L^{p^{+}q\left( .\right) }\left(
\Omega \right) }^{p^{+}}+\left\Vert u\right\Vert _{L^{p^{-}q\left( .\right)
}\left( \Omega \right) }^{p^{-}}\right) \\
&\leq &C\left( k,p^{+},p^{-},d,s,\left\vert \Omega \right\vert \right)
\left( \left\Vert u\right\Vert _{\mathcal{W}^{s,p\left( .,.\right) }\left(
\Omega \right) }^{p^{+}}+\left\Vert u\right\Vert _{\mathcal{W}^{s,p\left(
.,.\right) }\left( \Omega \right) }^{p^{-}}\right) \text{.}
\end{eqnarray*}%
Now combining this last inequality with the lemma (\ref{Estimat integral})
we can find a suitable positive constant $C$ such that 
\begin{eqnarray*}
&&\int_{\Omega }\int_{\Omega }\frac{\left\vert \psi \left( x\right) u\left(
x\right) -\psi \left( y\right) u\left( y\right) \right\vert ^{p\left(
x,y\right) }}{\lambda ^{p\left( x,y\right) }\left\vert x-y\right\vert
^{n+sp\left( x,y\right) }}dxdy \\
&\leq &C\left( \left\Vert u\right\Vert _{\mathcal{W}^{s,p\left( .,.\right)
}\left( \Omega \right) }^{p^{+}}+\left\Vert u\right\Vert _{\mathcal{W}%
^{s,p\left( .,.\right) }\left( \Omega \right) }^{p^{-}}\right) \text{,}
\end{eqnarray*}%
which implies that 
\begin{equation}
\left[ \psi u\right] _{^{_{_{\mathcal{W}^{s,p\left( .,.\right) }\left(
\Omega \right) }}}}\leq C\left( \left\Vert u\right\Vert _{\mathcal{W}%
^{s,p\left( .,.\right) }\left( \Omega \right) }^{p^{+}}+\left\Vert
u\right\Vert _{\mathcal{W}^{s,p\left( .,.\right) }\left( \Omega \right)
}^{p^{-}}\right)  \label{2.2}
\end{equation}

combining (\ref{2.1}) with (\ref{2.2}) we obtain 
\begin{equation*}
\left\Vert \psi u\right\Vert _{\mathcal{W}^{s,p\left( .,.\right) }\left(
\Omega \right) }\leq \left\{ 
\begin{array}{c}
C\left\Vert u\right\Vert _{\mathcal{W}^{s,p\left( .,.\right) }\left( \Omega
\right) }^{\beta }\text{ if }\left\Vert u\right\Vert _{\mathcal{W}%
^{s,p\left( .,.\right) }\left( \Omega \right) }\geq 1\text{,} \\ 
C\left\Vert u\right\Vert _{\mathcal{W}^{s,p\left( .,.\right) }\left( \Omega
\right) }^{\alpha }\text{ if }\left\Vert u\right\Vert _{\mathcal{W}%
^{s,p\left( .,.\right) }\left( \Omega \right) }\leq 1\text{.}%
\end{array}%
\right.
\end{equation*}\qed
\end{proof}

Now, we are ready to state and prove the extension theorem for any domain $%
\Omega $ satisfying certain regularity assumptions.

\begin{theorem}
\label{extension operator}Suppose that $\Omega $ is of classe $C^{1}$ with $%
\partial \Omega $ bounded. Then there exists a linear extension operator 
\begin{equation*}
\mathcal{E}:\mathcal{W}^{s,p\left( .,.\right) }\left( \Omega \right)
\rightarrow \mathcal{W}^{s,p\left( .,.\right) }\left( 
\mathbb{R}
^{n}\right)
\end{equation*}%
such that for all $u\in \mathcal{W}^{s,p\left( .,.\right) }\left( \Omega
\right) $,

\begin{itemize}
\item $\mathcal{E}_{u}\mid \Omega =u$,

\item 
\begin{equation*}
\left\Vert \mathcal{E}u\right\Vert _{\mathcal{W}^{s,p\left( .,.\right)
}\left( 
\mathbb{R}
^{n}\right) }\leq \left\{ 
\begin{array}{c}
C\left\Vert u\right\Vert _{\mathcal{W}^{s,p\left( .,.\right) }\left( \Omega
\right) }^{\beta }\text{ if }\left\Vert u\right\Vert _{\mathcal{W}%
^{s,p\left( .,.\right) }\left( \Omega \right) }\geq 1\text{,} \\ 
C\left\Vert u\right\Vert _{\mathcal{W}^{s,p\left( .,.\right) }\left( \Omega
\right) }^{\alpha }\text{ if }\left\Vert u\right\Vert _{\mathcal{W}%
^{s,p\left( .,.\right) }\left( \Omega \right) }\leq 1\text{,}%
\end{array}%
\right.
\end{equation*}%
where $C$ is a suitable positive constant.
\end{itemize}
\end{theorem}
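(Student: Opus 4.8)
The plan is to use the classical localization--flattening--reflection scheme, now assembled entirely from the three preliminary lemmas. Since $\partial\Omega$ is bounded and $\Omega$ is of class $C^{1}$, the boundary is compact, so finitely many local charts $H_{1},\dots,H_{N}$ with associated neighborhoods $U_{1},\dots,U_{N}$ cover $\partial\Omega$; adjoining an open set $U_{0}$ with $\overline{U_{0}}\subset\Omega$ gives an open cover of $\overline{\Omega}$. (We first extend $p$ and $q$ continuously, with unchanged bounds, from $\overline{\Omega}$ to $\mathbb{R}^{n}$, so that $\mathcal{W}^{s,p(.,.)}(\mathbb{R}^{n})$ is defined.) Choose a partition of unity $\{\psi_{i}\}_{i=0}^{N}$ subordinate to $\{U_{i}\}$ with $0\le\psi_{i}\le1$, each $\mathrm{supp}\,\psi_{i}$ compact in $U_{i}$, and $\sum_{i=0}^{N}\psi_{i}\equiv1$ on a neighborhood of $\overline{\Omega}$. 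For $u\in\mathcal{W}^{s,p(.,.)}(\Omega)$ write $u=\sum_{i=0}^{N}\psi_{i}u$; by Lemma \ref{truncation lemma near the boundary} each $\psi_{i}u$ lies in $\mathcal{W}^{s,p(.,.)}(\Omega)$ with the stated $\alpha/\beta$ control in terms of $\|u\|_{\mathcal{W}^{s,p(.,.)}(\Omega)}$.

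The interior term $\psi_{0}u$ has compact support inside $\Omega$, so Lemma \ref{u is identically zero in neig of bound} extends it by $0$ to all of $\mathbb{R}^{n}$, with norm controlled by $\|\psi_{0}u\|$ and hence by $\|u\|$. For $i\ge1$ transport $\psi_{i}u$ to the half-cube by setting $v_{i}=(\psi_{i}u)\circ H_{i}$ on $Q_{+}$. Because $H_{i}\in C^{1}(\overline{Q})$ and $H_{i}^{-1}\in C^{1}(\overline{U_{i}})$, the Jacobians of $H_{i}$ and $H_{i}^{-1}$ are bounded above and below on the relevant compact sets and $c|\xi-\eta|\le|H_{i}(\xi)-H_{i}(\eta)|\le C|\xi-\eta|$. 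Feeding the change of variables $x=H_{i}(\xi)$, $y=H_{i}(\eta)$ into the modular of $v_{i}$ and into $\int\!\!\int|u(x)-u(y)|^{p(x,y)}/|x-y|^{n+sp(x,y)}\,dx\,dy$ shows that $v_{i}\in\mathcal{W}^{s,\tilde p_{i}(.,.)}(Q_{+})$, where $\tilde p_{i}:=p\circ(H_{i}\times H_{i})$ is again a bounded continuous exponent, with norm comparable to $\|\psi_{i}u\|_{\mathcal{W}^{s,p(.,.)}(\Omega)}$; the passage between the modular and the norm raised to $p^{\pm}$ is exactly what Lemma \ref{Estimat integral} together with Hölder's inequality provides. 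Since $\mathrm{supp}\,\psi_{i}$ is compact in $U_{i}$, the function $v_{i}$ vanishes near the part of $\partial Q_{+}$ coming from $\partial Q$, so only the flat face $Q_{0}$ is relevant.

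After extending $\tilde p_{i}$ evenly in the last variables $\xi_{n},\eta_{n}$, apply Lemma \ref{extension par reflexion} to extend $v_{i}$ by reflection in $x_{n}$ to $\widetilde v_{i}\in\mathcal{W}^{s,\tilde p_{i}(.,.)}(Q)$; then $\widetilde v_{i}$ is compactly supported in $Q$ (possibly after multiplying by a cutoff $\theta_{i}\in\mathcal{C}_{0}^{\infty}(Q)$ equal to $1$ near $\mathrm{supp}\,v_{i}$, which is harmless by Lemma \ref{truncation lemma near the boundary}). Transport back by setting $w_{i}=(\theta_{i}\widetilde v_{i})\circ H_{i}^{-1}$ on $U_{i}$ and extend by $0$ to $\mathbb{R}^{n}$ via Lemma \ref{u is identically zero in neig of bound}; the bi-Lipschitz change of variables (now with $H_{i}^{-1}$) again keeps $w_{i}$ in $\mathcal{W}^{s,p(.,.)}(\mathbb{R}^{n})$ with controlled norm. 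By construction $w_{i}\mid_{\Omega}=\psi_{i}u$, since $\theta_{i}\widetilde v_{i}=v_{i}$ on $Q_{+}$ and $H_{i}(Q_{+})=U_{i}\cap\Omega$, while both sides vanish off $U_{i}$.

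Finally set $\mathcal{E}u:=w_{0}+\sum_{i=1}^{N}w_{i}$. Linearity is immediate because every step --- multiplication by a fixed $\psi_{i}$ or $\theta_{i}$, composition with the fixed diffeomorphism $H_{i}$, reflection, zero-extension --- is linear in $u$. On $\Omega$ we get $\mathcal{E}u=\sum_{i=0}^{N}\psi_{i}u=u$. For the norm bound, the triangle inequality in $\mathcal{W}^{s,p(.,.)}(\mathbb{R}^{n})$ reduces matters to estimating each $\|w_{i}\|$; chaining the estimates from Lemmas \ref{u is identically zero in neig of bound}, \ref{extension par reflexion}, \ref{truncation lemma near the boundary} and the two bi-Lipschitz changes of variables yields $\|w_{i}\|_{\mathcal{W}^{s,p(.,.)}(\mathbb{R}^{n})}\le C\|u\|_{\mathcal{W}^{s,p(.,.)}(\Omega)}^{\beta}$ when $\|u\|\ge1$ and $\le C\|u\|^{\alpha}$ when $\|u\|\le1$, with $C$ depending only on $n$, $s$, $p$, $\Omega$ and the finitely many charts; summing over the finitely many $i$ preserves this dichotomy. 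I expect the main obstacle to be precisely this change-of-variables step for the Gagliardo-type double integral with a \emph{variable} exponent: one must check that composition with the $C^{1}$ diffeomorphism keeps the exponent bounded and continuous, that $|x-y|$ and $|H_{i}^{-1}x-H_{i}^{-1}y|$ are comparable, and --- because $\mathcal{W}^{s,p(.,.)}$ is not homogeneous --- that all constants and the $\alpha$-versus-$\beta$ bookkeeping survive the repeated conversions between the modular and the norm, which is where Lemma \ref{Estimat integral} is invoked at each stage.
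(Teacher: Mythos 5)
Your proposal is correct and follows essentially the same route as the paper: a partition of unity subordinate to finitely many $C^{1}$ charts covering the compact boundary, zero-extension of the interior piece via Lemma \ref{u is identically zero in neig of bound}, flattening by the bi-Lipschitz charts, reflection via Lemma \ref{extension par reflexion}, cutoff and zero-extension back in $\mathbb{R}^{n}$, with the norm bookkeeping done through Lemma \ref{Estimat integral} and Lemma \ref{truncation lemma near the boundary}. The only differences are cosmetic (you multiply by $\psi_{i}$ before flattening and add a cutoff $\theta_{i}$ after reflection, while the paper flattens $u\mid_{B_{i}\cap\Omega}$ first and applies $\psi_{i}$ after transporting back), and you in fact make explicit two points the paper leaves implicit, namely the extension of the exponents $p,q$ beyond $\overline{\Omega}$ and the composed exponent $p\circ(H_{i}\times H_{i})$ under the change of variables.
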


To show this theorem we need the following lemma

\begin{lemma}
(partition of unity). Let $\Gamma $ be a compact subset of $\ 
\mathbb{R}
^{n}$ and $\emph{U}_{1},.....\emph{U}_{k}$ be a open covering of $\Gamma $.
Then there exist functions $\theta _{0},\theta _{1},....,\theta _{k}\in
C^{\infty }\left( 
\mathbb{R}
^{n}\right) $ such that:

\begin{itemize}
\item $0\leq \theta _{i}\leq 1$, $\forall i=0,1,....,k$ and $%
\sum\limits_{i=0}^{k}\theta _{i}=1$ on $%
\mathbb{R}
^{n}$,

\item supp$\theta _{i}$ is compact, supp$\theta _{i}\subset \emph{U}_{i}$
for all $i=1,2,...k$ and $supp\theta _{0}\subset 
\mathbb{R}
^{n}\backslash \Gamma $.
\end{itemize}

If $\Omega $ is an open bounded set and $\Gamma =\partial \Omega $, then $%
\theta _{0}\mid \Omega \in $ $C_{c}^{\infty }\left( \Omega \right) $.
\end{lemma}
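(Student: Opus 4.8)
The plan is to run the classical construction of a smooth partition of unity subordinate to the finite cover $U_1,\dots,U_k$, with the index $0$ playing the role of a bookkeeping device for the region away from $\Gamma$.

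First I would shrink the cover. For each $x\in\Gamma$ pick $i$ with $x\in U_i$ together with a closed ball $\overline{B(x,r_x)}\subset U_i$; by compactness of $\Gamma$, finitely many of the balls $B(x_j,r_{x_j})$ already cover $\Gamma$. Grouping them according to which $U_i$ contains their closure and setting $V_i:=\bigcup\{B(x_j,r_{x_j}):\overline{B(x_j,r_{x_j})}\subset U_i\}$, each $\overline{V_i}$ is compact, $\overline{V_i}\subset U_i$, and $\Gamma\subset V:=\bigcup_{i=1}^{k}V_i$. Next, for each $i$ a standard mollification argument (mollify the indicator of a bounded open set $W_i$ with $\overline{V_i}\subset W_i\subset\overline{W_i}\subset U_i$) produces $\varphi_i\in C_c^\infty(\mathbb{R}^n)$ with $0\le\varphi_i\le1$, $\varphi_i\equiv1$ on $\overline{V_i}$, and $\operatorname{supp}\varphi_i\subset U_i$. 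Put $\Phi:=\sum_{i=1}^{k}\varphi_i\in C_c^\infty(\mathbb{R}^n)$; then $\Phi\ge1$ on the open set $V\supset\Gamma$. Using compactness of $\Gamma$ once more, choose $\psi\in C_c^\infty(\mathbb{R}^n)$ with $0\le\psi\le1$, $\psi\equiv1$ on an open neighborhood of $\Gamma$, and $\operatorname{supp}\psi\subset V$.

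Then I would set $\theta_i:=\psi\varphi_i/\Phi$ for $i=1,\dots,k$ and $\theta_0:=1-\psi$. Each $\theta_i$ with $i\ge1$ is smooth on $\mathbb{R}^n$: on the open set $\{\Phi>0\}$ it coincides with the smooth function $\psi\varphi_i/\Phi$, on the open set $\mathbb{R}^n\setminus\operatorname{supp}\psi$ it is identically $0$, these two open sets cover $\mathbb{R}^n$ since $\operatorname{supp}\psi\subset V\subset\{\Phi>0\}$, and the two expressions agree on the overlap. From $0\le\varphi_i\le\Phi$ (where $\Phi>0$) and $0\le\psi\le1$ we get $0\le\theta_i\le1$; clearly $\operatorname{supp}\theta_i\subset\operatorname{supp}\varphi_i\subset U_i$ is compact; $\theta_0$ is smooth with $0\le\theta_0\le1$, and $\operatorname{supp}\theta_0$ avoids the neighborhood of $\Gamma$ on which $\psi\equiv1$, so $\operatorname{supp}\theta_0\subset\mathbb{R}^n\setminus\Gamma$; and on $\{\Phi>0\}$ we have $\sum_{i=0}^{k}\theta_i=(1-\psi)+\psi\Phi/\Phi=1$, while on $\{\Phi=0\}$ one has $\psi=0$ (since $\operatorname{supp}\psi\subset V$ is disjoint from $\{\Phi=0\}$), so again $\sum_{i=0}^{k}\theta_i=1$. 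Finally, if $\Gamma=\partial\Omega$ with $\Omega$ bounded and open, then $\operatorname{supp}\theta_0\cap\overline{\Omega}$ is closed and bounded, hence compact, and disjoint from $\partial\Omega$, hence contained in $\Omega$; therefore the support of $\theta_0|_{\Omega}$ is a compact subset of $\Omega$, i.e.\ $\theta_0|_{\Omega}\in C_c^\infty(\Omega)$.

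The step I expect to need the most care is justifying the smoothness of the $\theta_i$: since $\Phi$ may vanish away from $\Gamma$, one cannot divide by $\Phi$ globally, and the construction works only because $\psi$ is supported inside $\{\Phi\ge1\}$, so that $\psi\varphi_i/\Phi$ is a bona fide smooth function wherever it is nonzero and extends smoothly by zero elsewhere. The shrinking step is exactly what makes this possible, and it is precisely there that compactness of $\Gamma$ is used, to force $\Phi$ to be bounded below on a full neighborhood of $\Gamma$.
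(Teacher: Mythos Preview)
Your argument is correct: the shrinking of the cover, the construction of the bump functions $\varphi_i$, the normalization $\theta_i=\psi\varphi_i/\Phi$ with $\theta_0=1-\psi$, and the verification that $\theta_0|_\Omega$ has compact support in $\Omega$ all go through as written. The one delicate point you flagged yourself---that $\Phi$ may vanish, so the quotient is only defined where $\psi$ is supported---is handled cleanly by the inclusion $\operatorname{supp}\psi\subset V\subset\{\Phi\ge 1\}$.

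As for the comparison with the paper: the paper does not actually prove this lemma. Its entire proof reads ``This lemma is classical; similar statements can be found, for example, in [Adams].'' So there is no approach to compare against; you have supplied a full self-contained proof where the authors chose to cite a standard reference. Your construction is essentially the one found in textbooks such as Adams, so in spirit you are filling in exactly what the paper defers to the literature.
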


\begin{proof}
This lemma is classical; similar statements can be found, for example, in (%
\cite{Adams2}).\qed
\end{proof}

\begin{proof}
of theorem\bigskip\ \ref{extension operator} \\
We rectify $\partial
\Omega $ by local charts and use a partition of unity. Precisely, since $%
\partial \Omega $ is \ \ \ compact of class $\mathcal{C}^{1}$, we can find a
finite number of balls $B_{i}$ such that: $\partial \Omega \subset
\bigcup\limits_{i=1}^{k}B_{i}$, \ $%
\mathbb{R}
^{n}=\bigcup\limits_{j=1}^{k}B_{i}\cup \left( 
\mathbb{R}
^{n}\backslash \partial \Omega \right) $ and bijective maps: $%
\begin{array}{ccc}
H_{i}: & Q\mathbb{\rightarrow } & B_{i}%
\end{array}%
$ such that: $H_{i}\in C^{1}\left( \overline{Q}\right) $, $H_{i}^{-1}\in
C^{1}\left( \overline{B_{i}}\right) ,$ $H_{i}\left( Q_{+}\right) =B_{i}\cap
\Omega $, and $H_{i}\left( Q_{0}\right) =B_{i}\cap \partial \Omega $. There
exist $k$ smooth functions $\psi _{0},\psi _{1},...,\psi _{k}$ such that $%
supp\psi _{0}\subset 
\mathbb{R}
^{n}\backslash \partial \Omega $, $supp\psi _{i}\subset B_{i}$ for any $i\in
\left\{ 1,2,...,k\right\} $, $0\leq \psi _{i}\leq 1$ for any $i\in \left\{
0,1,2,...,k\right\} $ and $\sum\limits_{i=0}^{k}\psi _{i}=1$.

Given $u\in \mathcal{W}^{s,p\left( .,.\right) }\left( \Omega \right) $, then
:%
\begin{equation*}
u=\sum\limits_{i=0}^{k}\psi _{i}u
\end{equation*}%
by lemma \ref{truncation lemma near the boundary}, we know that $\psi
_{0}u\in \mathcal{W}^{s,p\left( .,.\right) }\left( \Omega \right) $. Since $%
\psi _{0}u\equiv 0$ in a neighborhood of $\partial \Omega $, then by lemma %
\ref{u is identically zero in neig of bound}, we can extend it to the whole
of $%
\mathbb{R}
^{n}$, by setting: 
\begin{equation*}
\widetilde{\psi _{0}u}\left( x\right) =\left\{ 
\begin{array}{c}
\psi _{0}u\left( x\right) \text{ \ if\ \ \ \ }x\in \Omega \text{,} \\ 
0\text{ \ \ \ \ if\ \ \ \ }x\in 
\mathbb{R}
^{n}\backslash \Omega \text{,\ \ \ \ \ \ }%
\end{array}%
\right.
\end{equation*}%
and 
\begin{equation*}
\left\Vert \widetilde{\psi _{0}u}\right\Vert _{\mathcal{W}^{s,p\left(
.,.\right) }\left( \Omega \right) }\leq \left\{ 
\begin{array}{c}
C\left\Vert u\right\Vert _{\mathcal{W}^{s,p\left( .,.\right) }\left( \Omega
\right) }^{\beta }\text{ if }\left\Vert u\right\Vert _{\mathcal{W}%
^{s,p\left( .,.\right) }\left( \Omega \right) }\geq 1\text{,} \\ 
C\left\Vert u\right\Vert _{\mathcal{W}^{s,p\left( .,.\right) }\left( \Omega
\right) }^{\alpha }\text{ if }\left\Vert u\right\Vert _{\mathcal{W}%
^{s,p\left( .,.\right) }\left( \Omega \right) }\leq 1\text{,}%
\end{array}%
\right.
\end{equation*}%
where $C$ is a suitable positive constant.

Now, we extend $u_{i}$, $1\leq i\leq k$\ $\ $where $u_{i}=$ $\psi _{i}u$.
Consider the restriction of $u$ to $B_{i}\cap \Omega $ and transfer this
function to $Q_{+}$ with the help of $H_{i}$.

For any $i\in \left\{ 1,...,k\right\} $, let us consider $u\mid B_{i}\cap
\Omega $ and set: $v_{i}\left( y\right) :=u\left( H_{i}\left( y\right)
\right) $ for all $y\in $ $Q_{+}$.

Show that $v_{i}\in \mathcal{W}^{s,p\left( .,.\right) }\left( Q_{+}\right) $%
, by setting $x=$ $H_{i}\left( \widehat{x}\right) $, we have:%
\begin{eqnarray*}
&&\int_{Q_{+}}\int_{Q_{+}}\frac{\left\vert v\left( \widehat{x}\right)
-v\left( \widehat{y}\right) \right\vert ^{p\left( H_{i}\left( \widehat{x}%
\right) ,H_{i}\left( \widehat{y}\right) \right) }}{\lambda ^{p\left(
H_{i}\left( \widehat{x}\right) ,H_{i}\left( \widehat{y}\right) \right)
}\left\vert \widehat{x}-\widehat{y}\right\vert ^{n+sp\left( H_{i}\left( 
\widehat{x}\right) ,H_{i}\left( \widehat{y}\right) \right) }}d\widehat{x}d%
\widehat{y} \\
&=&\int_{Q_{+}}\int_{Q_{+}}\frac{\left\vert u\left( H_{i}\left( \widehat{x}%
\right) \right) -u\left( H_{i}\left( \widehat{y}\right) \right) \right\vert
^{p\left( H_{i}\left( \widehat{x}\right) ,H_{i}\left( \widehat{y}\right)
\right) }}{\lambda ^{p\left( H_{i}\left( \widehat{x}\right) ,H_{i}\left( 
\widehat{y}\right) \right) }\left\vert \widehat{x}-\widehat{y}\right\vert
^{n+sp\left( H_{i}\left( \widehat{x}\right) ,H_{i}\left( \widehat{y}\right)
\right) }}d\widehat{x}d\widehat{y} \\
&=&\int_{B_{i}\cap \Omega }\int_{B_{i}\cap \Omega }\frac{\left\vert u\left(
x\right) -u\left( y\right) \right\vert ^{p\left( x,y\right) }}{\lambda
^{p\left( x,y\right) }\left\vert H_{i}^{-1}\left( x\right) -H_{i}^{-1}\left(
y\right) \right\vert ^{n+sp\left( x,y\right) }}\det \left( H_{i}^{-1}\right)
dxdy
\end{eqnarray*}%
since $\det \left( H_{i}^{-1}\right) \in L^{\infty }\left( B_{i}\cap \Omega
\right) $ and $H_{i}$ is a bi-lipschitz map, then: 
\begin{eqnarray*}
&&\int_{Q_{+}}\int_{Q_{+}}\frac{\left\vert v\left( \widehat{x}\right)
-v\left( \widehat{y}\right) \right\vert ^{p\left( H_{i}\left( \widehat{x}%
\right) ,H_{i}\left( \widehat{y}\right) \right) }}{\lambda ^{p\left(
H_{i}\left( \widehat{x}\right) ,H_{i}\left( \widehat{y}\right) \right)
}\left\vert \widehat{x}-\widehat{y}\right\vert ^{n+sp\left( H_{i}\left( 
\widehat{x}\right) ,H_{i}\left( \widehat{y}\right) \right) }}d\widehat{x}d%
\widehat{y} \\
&\leq &C\int_{B_{i}\cap \Omega }\int_{B_{i}\cap \Omega }\frac{\left\vert
u\left( x\right) -u\left( y\right) \right\vert ^{p\left( x,y\right) }}{%
\lambda ^{p\left( x,y\right) }\left\vert x-y\right\vert ^{n+sp\left(
x,y\right) }}dxdy\text{.}
\end{eqnarray*}%
Note that this integral is finite since $u\in W^{s,p\left( .,.\right)
}\left( B_{i}\cap \Omega \right) $. Now by lemma \ref{extension par
reflexion} we can extend $v_{i}$ to all $Q$ so that the extension $%
v_{i}^{\ast }\in W^{s,p\left( .,.\right) }\left( Q\right) $. Retransfer $%
v_{i}^{\ast }$ to $B_{i}$ using $H_{i}^{-1}$ and we set: 
\begin{equation*}
w_{i}\left( x\right) :=v_{i}^{\ast }\left( H_{i}^{-1}\left( x\right) \right) 
\text{, for any }x\in B_{i}\text{.}
\end{equation*}%
Since $H_{i}$ is a bi-lipschitz map, by arguing as above it follows that $%
w_{i}\in \mathcal{W}^{s,p\left( .,.\right) }\left( B_{i}\right) $, $w_{i}=u$
on $B_{i}\cap \Omega $, and consequently $\psi _{i}w_{i}=\psi _{i}u$ on $%
B_{i}\cap \Omega $. By definition $\psi _{i}w_{i}$ has compact support in $%
B_{i}$ and therefore, as done for $\psi _{0}u$, we can consider the
extension $\widetilde{\psi _{i}w_{i}}$ to all $%
\mathbb{R}
^{n}$ a way that $\widetilde{\psi _{i}w_{i}}\in \mathcal{W}^{s,p\left(
.,.\right) }\left( 
\mathbb{R}
^{n}\right) $, we set for $x\in 
\mathbb{R}
^{n}$, 
\begin{equation*}
\widetilde{\psi _{i}w_{i}}\left( x\right) =\left\{ 
\begin{array}{c}
\psi _{i}w_{i}\left( x\right) \text{ \ if\ \ \ \ }x\in B_{i}\text{,} \\ 
0\text{ \ \ \ \ if\ \ \ \ }x\in 
\mathbb{R}
^{n}\backslash B_{i}\text{\ .\ \ \ \ \ \ \ \ }%
\end{array}%
\right.
\end{equation*}%
Finally, let the operator:%
\begin{equation*}
\mathcal{E}u=\widetilde{\psi _{0}u}+\sum_{i=1}^{k}\widetilde{\psi _{i}w_{i}}
\end{equation*}%
be the extension of $u$ defined on all $%
\mathbb{R}
^{n}$. By construction we have $\mathcal{E}u\mid \Omega =u$ and $\mathcal{E}$
is a linear extension operator. On the other hand%
\begin{eqnarray*}
\left\Vert \mathcal{E}u\right\Vert _{\mathcal{W}^{s,p\left( .,.\right)
}\left( 
\mathbb{R}
^{n}\right) } &=&\left\Vert \widetilde{\psi _{0}u}+\sum_{i=1}^{k}\widetilde{%
\psi _{i}w_{i}}\right\Vert _{_{\mathcal{W}^{s,p\left( .,.\right) }\left( 
\mathbb{R}
^{n}\right) }} \\
&\leq &\left\Vert \widetilde{\psi _{0}u}\right\Vert _{\mathcal{W}^{s,p\left(
.,.\right) }\left( 
\mathbb{R}
^{n}\right) }+\sum_{i=1}^{k}\left\Vert \widetilde{\psi _{i}w_{i}}\right\Vert
_{\mathcal{W}^{s,p\left( .,.\right) }\left( 
\mathbb{R}
^{n}\right) }\text{.}
\end{eqnarray*}%
If $\left\Vert u\right\Vert _{\mathcal{W}^{s,p\left( .,.\right) }\left(
\Omega \right) }\geq 1$, then 
\begin{eqnarray*}
\left\Vert \mathcal{E}u\right\Vert _{\mathcal{W}^{s,p\left( .,.\right)
}\left( 
\mathbb{R}
^{n}\right) } &\leq &C\left\Vert u\right\Vert _{\mathcal{W}^{s,p\left(
.,.\right) }\left( \Omega \right) }^{\beta }+\sum_{i=1}^{k}C_{i}\left\Vert
\psi _{i}w_{i}\right\Vert _{\mathcal{W}^{s,p\left( .,.\right) }\left(
B_{i}\right) }^{\beta } \\
&\leq &C\left\Vert u\right\Vert _{\mathcal{W}^{s,p\left( .,.\right) }\left(
\Omega \right) }^{\beta }+\sum_{i=1}^{k}C_{i}\left\Vert w_{i}\right\Vert _{%
\mathcal{W}^{s,p\left( .,.\right) }\left( B_{i}\right) }^{\beta } \\
&\leq &C\left\Vert u\right\Vert _{\mathcal{W}^{s,p\left( .,.\right) }\left(
\Omega \right) }^{\beta }+\sum_{i=1}^{k}C_{i}\left\Vert w_{i}\right\Vert _{%
\mathcal{W}^{s,p\left( .,.\right) }\left( B_{i}\right) }^{\beta } \\
&\leq &C\left\Vert u\right\Vert _{\mathcal{W}^{s,p\left( .,.\right) }\left(
\Omega \right) }^{\beta }+\sum_{i=1}^{k}C_{i}\left\Vert v_{i}^{\ast
}\right\Vert _{\mathcal{W}^{s,p\left( .,.\right) }\left( Q\right) }^{\beta }
\\
&\leq &C\left\Vert u\right\Vert _{\mathcal{W}^{s,p\left( .,.\right) }\left(
\Omega \right) }^{\beta }+\sum_{i=1}^{k}C_{i}\left\Vert v_{i}\right\Vert _{%
\mathcal{W}^{s,p\left( .,.\right) }\left( Q^{+}\right) }^{\beta } \\
&\leq &C\left\Vert u\right\Vert _{\mathcal{W}^{s,p\left( .,.\right) }\left(
\Omega \right) }^{\beta }+\sum_{i=1}^{k}C_{i}\left\Vert u\right\Vert _{%
\mathcal{W}^{s,p\left( .,.\right) }\left( \Omega \cap B_{i}\right) }^{\beta }
\\
&\leq &\mathcal{C}\left\Vert u\right\Vert _{W^{s,p\left( .,.\right) }\left(
\Omega \right) }^{\beta }\text{.}
\end{eqnarray*}%
By the same way we get 
\begin{equation*}
\left\Vert \mathcal{E}u\right\Vert _{\mathcal{W}^{s,p\left( .,.\right)
}\left( 
\mathbb{R}
^{n}\right) }\leq C\left\Vert u\right\Vert _{\mathcal{W}^{s,p\left(
.,.\right) }\left( \Omega \right) }^{\alpha }
\end{equation*}%
if $\left\Vert u\right\Vert _{\mathcal{W}^{s,p\left( .,.\right) }\left(
\Omega \right) }\leq 1$. Finally the operator:%
\begin{equation*}
\mathcal{E}u=\widetilde{\psi _{0}u}+\sum_{i=1}^{k}\widetilde{\psi _{i}w_{i}}
\end{equation*}%
possesses all the desired properties.\qed
\end{proof}

\section{Complemented subspaces in $\mathcal{W}^{s,p\left( .,.\right)
}\left( 
\mathbb{R}
^{n}\right) $}

\bigskip The complemented subspace problem plays a key role in the
development of the Banach space theory. In the case of Hilbert space $H$ it
is know that every closed subspace $Y\subset H$ is complemented; the
orthogonal complement $Y^{\bot }$ is a closed subspace of $H$ and we have 
\begin{equation*}
H=Y\oplus Y^{\bot }\text{.}
\end{equation*}

The lack of the Hilbert structure of the space $\mathcal{W}^{s,p\left(
.,.\right) }\left( 
\mathbb{R}
^{n}\right) $ make the complemented subspace problem in this space very
difficult. In this section we will study this problem by using the previous
extension theorem.

The trace of a function is in some sense a restriction of the function to a
subset of the original set of definition.

\begin{definition}
For any $u\in \mathcal{W}^{s,p\left( .,.\right) }\left( 
\mathbb{R}
^{n}\right) $, define the trace operator $\mathcal{T}$ by 
\begin{equation*}
\begin{array}{ccc}
\mathcal{T}:\mathcal{W}^{s,p\left( .,.\right) }\left( 
\mathbb{R}
^{n}\right) & \rightarrow & \mathcal{W}^{s,p\left( .,.\right) }\left( \Omega
\right)%
\end{array}%
\text{, }\mathcal{T}u=u\mid _{\Omega }\text{.}
\end{equation*}
\end{definition}

Note that if $\mathcal{E}$ is an extension operator, then $\mathcal{T}\circ 
\mathcal{E}$ is the identity on $\mathcal{W}^{s,p\left( .,.\right) }\left(
\Omega \right) $.

\begin{corollary}
The operator $\mathcal{T}$ is surjective.
\end{corollary}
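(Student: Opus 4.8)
The plan is to deduce surjectivity of $\mathcal{T}$ directly from the existence of the linear extension operator $\mathcal{E}$ constructed in Theorem~\ref{extension operator}, exactly as anticipated in the remark preceding the corollary. The only hypothesis I need is that $\Omega$ is of class $C^{1}$ with $\partial\Omega$ bounded, so that Theorem~\ref{extension operator} applies and produces $\mathcal{E}:\mathcal{W}^{s,p\left(.,.\right)}\left(\Omega\right)\rightarrow\mathcal{W}^{s,p\left(.,.\right)}\left(\mathbb{R}^{n}\right)$ with $\mathcal{E}u\mid_{\Omega}=u$ for every $u\in\mathcal{W}^{s,p\left(.,.\right)}\left(\Omega\right)$.

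First I would fix an arbitrary $v\in\mathcal{W}^{s,p\left(.,.\right)}\left(\Omega\right)$, the target space of $\mathcal{T}$. Setting $u:=\mathcal{E}v$, Theorem~\ref{extension operator} guarantees $u\in\mathcal{W}^{s,p\left(.,.\right)}\left(\mathbb{R}^{n}\right)$, so $u$ is a legitimate element of the domain of $\mathcal{T}$. Then, by the definition of $\mathcal{T}$ and the restriction property of $\mathcal{E}$,
\begin{equation*}
\mathcal{T}u=u\mid_{\Omega}=\left(\mathcal{E}v\right)\mid_{\Omega}=v\text{.}
\end{equation*}
Since $v$ was arbitrary, every element of $\mathcal{W}^{s,p\left(.,.\right)}\left(\Omega\right)$ lies in the range of $\mathcal{T}$, which is precisely the assertion that $\mathcal{T}$ is surjective. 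Equivalently, one notes that $\mathcal{T}\circ\mathcal{E}=\mathrm{Id}$ on $\mathcal{W}^{s,p\left(.,.\right)}\left(\Omega\right)$, and a map admitting a right inverse is surjective.

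There is essentially no obstacle here: the corollary is an immediate consequence of Theorem~\ref{extension operator}, and all the genuine work---the construction of $\mathcal{E}$ via local charts, reflection, truncation near $\partial\Omega$, and the modular estimates---has already been carried out in the preceding section. The only point worth a word of care is that $\mathcal{T}$ is indeed well defined, i.e.\ that restricting a function in $\mathcal{W}^{s,p\left(.,.\right)}\left(\mathbb{R}^{n}\right)$ to $\Omega$ lands in $\mathcal{W}^{s,p\left(.,.\right)}\left(\Omega\right)$; this is clear since integrating the Gagliardo-type double integral over $\Omega\times\Omega\subset\mathbb{R}^{n}\times\mathbb{R}^{n}$ and the $L^{q\left(.\right)}$ modular over $\Omega\subset\mathbb{R}^{n}$ only decreases them, so $\mathcal{T}$ is a bounded linear map, and the argument above then closes the proof.
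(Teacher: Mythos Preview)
Your proof is correct and follows exactly the same approach as the paper: both deduce surjectivity of $\mathcal{T}$ immediately from the existence of the extension operator $\mathcal{E}$ of Theorem~\ref{extension operator}, using that $\mathcal{T}\circ\mathcal{E}$ is the identity on $\mathcal{W}^{s,p\left(.,.\right)}\left(\Omega\right)$. Your version is simply more explicit, additionally spelling out why $\mathcal{T}$ is well defined, which the paper leaves implicit.
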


\begin{proof}
by theorem \ref{extension operator} every function $u$ in $\mathcal{W}%
^{s,p\left( .,.\right) }\left( \Omega \right) $ admits an extension to $%
\mathcal{W}^{s,p\left( .,.\right) }\left( 
\mathbb{R}
^{n}\right) $ then the trace operator $\mathcal{T}$ is surjective.\qed
\end{proof}

\begin{theorem}
\label{Theorem of supplementarity}Suppose that $\Omega $ is of classe $%
\mathcal{C}^{1}$ with $\partial \Omega $ bounded. Then the subspace $Ker%
\mathcal{T}$ is complemented in $\mathcal{W}^{s,p\left( .,.\right) }\left( 
\mathbb{R}
^{n}\right) $.
\end{theorem}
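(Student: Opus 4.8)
The plan is to produce a bounded linear projection $P$ on $\mathcal{W}^{s,p\left( .,.\right) }\left( \mathbb{R}^{n}\right) $ whose kernel is exactly $Ker\,\mathcal{T}$; the classical splitting attached to a continuous idempotent then gives the complement at once. The operator will of course be $P:=\mathcal{E}\circ \mathcal{T}$, built from the extension operator of Theorem \ref{extension operator} and the trace operator.

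First I would record two continuity facts. The trace (restriction) operator is linear and satisfies $\left\Vert \mathcal{T}u\right\Vert _{\mathcal{W}^{s,p\left( .,.\right) }\left( \Omega \right) }\leq \left\Vert u\right\Vert _{\mathcal{W}^{s,p\left( .,.\right) }\left( \mathbb{R}^{n}\right) }$: passing from integration over $\mathbb{R}^{n}$ (resp. $\mathbb{R}^{n}\times \mathbb{R}^{n}$) to integration over $\Omega $ (resp. $\Omega \times \Omega $) only decreases the modular integrals defining the $L^{q\left( .\right) }$-norm and the Gagliardo seminorm, hence decreases the corresponding Luxemburg infima. Next, although Theorem \ref{extension operator} states the bound on $\mathcal{E}$ through the exponents $\alpha ,\beta $, the linearity of $\mathcal{E}$ upgrades this to ordinary boundedness: applying the estimate to the unit vector $u/\left\Vert u\right\Vert _{\mathcal{W}^{s,p\left( .,.\right) }\left( \Omega \right) }$ (so the case $\left\Vert \cdot \right\Vert \leq 1$ applies and $1^{\alpha }=1$) gives $\left\Vert \mathcal{E}\left( u/\left\Vert u\right\Vert \right) \right\Vert _{\mathcal{W}^{s,p\left( .,.\right) }\left( \mathbb{R}^{n}\right) }\leq C$, whence $\left\Vert \mathcal{E}u\right\Vert _{\mathcal{W}^{s,p\left( .,.\right) }\left( \mathbb{R}^{n}\right) }\leq C\left\Vert u\right\Vert _{\mathcal{W}^{s,p\left( .,.\right) }\left( \Omega \right) }$ for every $u$. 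Thus $P=\mathcal{E}\circ \mathcal{T}$ is a bounded linear operator on $\mathcal{W}^{s,p\left( .,.\right) }\left( \mathbb{R}^{n}\right) $, and since $\mathcal{T}\circ \mathcal{E}$ is the identity on $\mathcal{W}^{s,p\left( .,.\right) }\left( \Omega \right) $ (the remark after the Definition) we obtain $P^{2}=\mathcal{E}\circ \left( \mathcal{T}\circ \mathcal{E}\right) \circ \mathcal{T}=\mathcal{E}\circ \mathcal{T}=P$, i.e. $P$ is a continuous projection.

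Then I would identify its kernel and image. If $\mathcal{T}u=0$ then $Pu=\mathcal{E}0=0$; conversely if $Pu=\mathcal{E}\left( \mathcal{T}u\right) =0$, applying $\mathcal{T}$ and using $\mathcal{T}\circ \mathcal{E}=Id$ gives $\mathcal{T}u=0$. Hence $Ker\,P=Ker\,\mathcal{T}$, which in particular is closed because $\mathcal{T}$ is continuous. Moreover $Im\,P=Ker\left( Id-P\right) $ is closed, and a one-line computation shows $Im\,P=\mathcal{E}\left( \mathcal{W}^{s,p\left( .,.\right) }\left( \Omega \right) \right) $: the inclusion $\subseteq $ is clear, and for $v\in \mathcal{W}^{s,p\left( .,.\right) }\left( \Omega \right) $ one has $P\left( \mathcal{E}v\right) =\mathcal{E}\left( \mathcal{T}\mathcal{E}v\right) =\mathcal{E}v$.

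Finally I would invoke the standard decomposition induced by a bounded projection. For every $u\in \mathcal{W}^{s,p\left( .,.\right) }\left( \mathbb{R}^{n}\right) $ write $u=\left( Id-P\right) u+Pu$, where $\left( Id-P\right) u\in Ker\,P=Ker\,\mathcal{T}$ (since $P\left( Id-P\right) =P-P^{2}=0$) and $Pu\in Im\,P$; and if $v\in Ker\,P\cap Im\,P$ then $v=Pw$ for some $w$ and $0=Pv=P^{2}w=Pw=v$. Therefore $\mathcal{W}^{s,p\left( .,.\right) }\left( \mathbb{R}^{n}\right) =Ker\,\mathcal{T}\oplus \mathcal{E}\left( \mathcal{W}^{s,p\left( .,.\right) }\left( \Omega \right) \right) $ with both summands closed, which is exactly the statement that $Ker\,\mathcal{T}$ is complemented. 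The only genuinely delicate point is the second continuity remark above — confirming that the super-/sub-linear-looking estimate of Theorem \ref{extension operator} really does yield an operator-norm bound for the \emph{linear} map $\mathcal{E}$; once that is in place, the rest is formal Banach-space bookkeeping with continuous projections.
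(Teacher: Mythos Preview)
Your argument is correct and follows essentially the same route as the paper: both decompose $u=(u-\mathcal{E}\mathcal{T}u)+\mathcal{E}\mathcal{T}u$ and identify the summands with $Ker\,\mathcal{T}$ and $\mathcal{E}\bigl(\mathcal{W}^{s,p(.,.)}(\Omega)\bigr)$. Your version is in fact more careful than the paper's, which simply asserts that $\mathcal{E}\bigl(\mathcal{W}^{s,p(.,.)}(\Omega)\bigr)$ is closed and never addresses how the $\alpha,\beta$ estimate yields genuine boundedness of $\mathcal{E}$; your normalization trick and the identification $\mathrm{Im}\,P=Ker(Id-P)$ supply exactly those missing justifications.
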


\begin{proof}
By theorem \ref{extension operator} there exist a linear extension operator 
\begin{equation*}
\begin{array}{ccc}
\mathcal{E}: & \mathcal{W}^{s,p\left( .,.\right) }\left( \Omega \right)
\rightarrow & \mathcal{W}^{s,p\left( .,.\right) }\left( 
\mathbb{R}
^{n}\right)%
\end{array}%
\end{equation*}%
such that 
\begin{equation*}
\forall x\in \Omega :\mathcal{E}u\left( x\right) =u\left( x\right) \text{.}
\end{equation*}%
We have 
\begin{equation*}
\mathcal{E}\left( \mathcal{W}^{s,p\left( .,.\right) }\left( \Omega \right)
\right) \subset \mathcal{W}^{s,p\left( .,.\right) }\left( 
\mathbb{R}
^{n}\right)
\end{equation*}
is a closed subspace, and $\ $%
\begin{equation*}
\forall u\in \mathcal{W}^{s,p\left( .,.\right) }\left( 
\mathbb{R}
^{n}\right) :u=u-\mathcal{E}\left( \mathcal{T}u\right) +\mathcal{E}\left( 
\mathcal{T}u\right) .
\end{equation*}

Since 
\begin{equation*}
\left( u-\mathcal{E}\left( \mathcal{T}u\right) \right) \in Ker\mathcal{T}%
\text{, \ }\mathcal{E}\left( \mathcal{T}u\right) \in \mathcal{E}\left( 
\mathcal{W}^{s,p\left( .,.\right) }\left( \Omega \right) \right) \text{,}
\end{equation*}
and 
\begin{equation*}
Ker\mathcal{T\cap E}\left( \mathcal{W}^{s,p\left( .,.\right) }\left( \Omega
\right) \right) =\left\{ 0\right\} ,
\end{equation*}
we conclude that 
\begin{equation*}
\mathcal{W}^{s,p\left( .,.\right) }\left( 
\mathbb{R}
^{n}\right) =Ker\mathcal{T}\oplus \mathcal{E}\left( \mathcal{W}^{s,p\left(
.,.\right) }\left( \Omega \right) \right) \text{.}
\end{equation*}%
and this complete the proof.\qed
\end{proof}

\begin{corollary}
If $p\left( x,y\right) =2$, then :%
\begin{equation*}
\mathcal{W}^{s,2}\left( 
\mathbb{R}
^{n}\right) =H^{s}\left( 
\mathbb{R}
^{n}\right) =Ker\mathcal{T}\oplus \left( Ker\mathcal{T}\right) ^{\perp }%
\text{.}
\end{equation*}
\end{corollary}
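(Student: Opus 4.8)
The plan is to reduce everything to the classical projection theorem in Hilbert spaces. First I would observe that when $p\left(x,y\right)\equiv 2$ the Luxemburg seminorm $\left[u\right]_{\mathcal{W}^{s,p\left(.,.\right)}}$ collapses to the ordinary Gagliardo seminorm of order $s$ with exponent $2$, and the modular becomes quadratic; hence $\mathcal{W}^{s,2}\left(\mathbb{R}^{n}\right)$ coincides, as a normed space, with the usual fractional Sobolev space $H^{s}\left(\mathbb{R}^{n}\right)$. The key point is that this norm is induced by the inner product
\begin{equation*}
\left\langle u,v\right\rangle :=\int_{\mathbb{R}^{n}}u\left(x\right)v\left(x\right)dx+\int_{\mathbb{R}^{n}}\int_{\mathbb{R}^{n}}\frac{\left(u\left(x\right)-u\left(y\right)\right)\left(v\left(x\right)-v\left(y\right)\right)}{\left\vert x-y\right\vert^{n+2s}}dxdy,
\end{equation*}
so that $H^{s}\left(\mathbb{R}^{n}\right)$ is a Hilbert space. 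Concretely this step amounts to checking the parallelogram law for the $\mathcal{W}^{s,2}$-norm (Jordan--von Neumann) together with completeness; it is the one place where a little care is needed, since for a genuinely variable exponent $p\left(.,.\right)$ no inner-product structure exists — it is precisely the specialization $p\equiv 2$ that restores it.

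Next I would record that the trace operator $\mathcal{T}:\mathcal{W}^{s,2}\left(\mathbb{R}^{n}\right)\rightarrow\mathcal{W}^{s,2}\left(\Omega\right)$ is bounded and linear. Linearity is obvious, and boundedness follows at once from the definition of the norms: restricting the domain of integration from $\mathbb{R}^{n}$ (resp. $\mathbb{R}^{n}\times\mathbb{R}^{n}$) to $\Omega$ (resp. $\Omega\times\Omega$) can only decrease the $L^{q\left(.\right)}$ part and the modular defining the seminorm, so $\left\Vert \mathcal{T}u\right\Vert_{\mathcal{W}^{s,2}\left(\Omega\right)}\leq\left\Vert u\right\Vert_{\mathcal{W}^{s,2}\left(\mathbb{R}^{n}\right)}$. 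Consequently $Ker\mathcal{T}$ is a \emph{closed} subspace of the Hilbert space $H^{s}\left(\mathbb{R}^{n}\right)$.

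Finally, I would invoke the projection theorem: in any Hilbert space every closed subspace $Y$ admits the orthogonal decomposition $H=Y\oplus Y^{\perp}$ with $Y\cap Y^{\perp}=\left\{0\right\}$. Applying this to $Y=Ker\mathcal{T}$ inside $H=H^{s}\left(\mathbb{R}^{n}\right)=\mathcal{W}^{s,2}\left(\mathbb{R}^{n}\right)$ gives exactly
\begin{equation*}
\mathcal{W}^{s,2}\left(\mathbb{R}^{n}\right)=H^{s}\left(\mathbb{R}^{n}\right)=Ker\mathcal{T}\oplus\left(Ker\mathcal{T}\right)^{\perp},
\end{equation*}
which is the assertion. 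For consistency with Theorem~\ref{Theorem of supplementarity} one may further note that both $\left(Ker\mathcal{T}\right)^{\perp}$ and the closed subspace $\mathcal{E}\left(\mathcal{W}^{s,2}\left(\Omega\right)\right)$ are complements of $Ker\mathcal{T}$, hence isomorphic through the associated projections, though this is not needed for the statement. The only real content is the first step — identifying $\mathcal{W}^{s,2}$ with an honest Hilbert space — after which the conclusion is immediate; so I expect the main (and rather mild) obstacle to be the bookkeeping that shows the variable-exponent construction, frozen at $p\equiv 2$, reproduces the standard Hilbertian $H^{s}$ norm.
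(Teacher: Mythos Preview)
Your proposal is correct and follows essentially the same route as the paper: identify $\mathcal{W}^{s,2}\left(\mathbb{R}^{n}\right)$ with the Hilbert space $H^{s}\left(\mathbb{R}^{n}\right)$, observe that $Ker\mathcal{T}$ is closed, and apply the orthogonal projection theorem. The paper's own proof is a two-line version of exactly this argument, whereas you have additionally spelled out the inner product, the boundedness of $\mathcal{T}$, and the relation to the complement $\mathcal{E}\left(\mathcal{W}^{s,2}\left(\Omega\right)\right)$.
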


\begin{proof}
In this case, theorem \ref{Theorem of supplementarity} is a direct
consequence of the Hilbert structure of the Space $\mathcal{W}^{s,2}\left( 
\mathbb{R}
^{n}\right) $. Indeed $Ker\mathcal{T}$ is a closed subspace of $\mathcal{W}%
^{s,2}\left( 
\mathbb{R}
^{n}\right) $, so 
\begin{equation*}
\mathcal{W}^{s,2}\left( 
\mathbb{R}
^{n}\right) =Ker\mathcal{T}\oplus \left( Ker\mathcal{T}\right) ^{\perp }%
\text{.}
\end{equation*}\qed
\end{proof}




\end{document}